\documentclass{article}
\usepackage{graphicx} 
\usepackage{amsthm,amsmath,amsfonts,amssymb}

\usepackage[a4paper, margin=1in]{geometry}

\usepackage{enumitem}

\usepackage{booktabs}

\usepackage{dsfont}
\usepackage{subcaption}   
\usepackage{xcolor}

\usepackage{appendix}

\usepackage{hyperref}
\hypersetup{
    colorlinks=true,
    linkcolor=blue,
    filecolor=magenta,
    urlcolor=cyan,
    pdftitle={Random permutations using GEPP},
    pdfpagemode=FullScreen,
    }

\usepackage{tikz}
\usetikzlibrary{positioning}
\usetikzlibrary{backgrounds}

\usepackage{ytableau}
\usepackage{multicol}

\usepackage{matlab-prettifier}

\usepackage{cleveref}

\def\V#1{{\mathbf #1}}

\def\GUE{\operatorname{GUE}}
\def\GOE{\operatorname{GOE}}
\def\Bern{\operatorname{Bern}}

\def\Unif{\operatorname{Unif}}
\def\Haar{\operatorname{Haar}}

\def\P{\mathbb{P}}

\def\B{\operatorname{B}}

\def\SO{\operatorname{SO}}

\def\ord{\operatorname{ord}}

\DeclareMathOperator*{\argmax}{arg\,max}

\newtheorem{lemma}{Lemma}
\newtheorem{proposition}{Proposition}
\newtheorem{theorem}{Theorem}
\newtheorem{corollary}{Corollary}
\newtheorem{conjecture}{Conjecture}
\newtheorem{remark}{Remark}

\allowdisplaybreaks

\title{Random permutations using GEPP}
\author{Kenji Gunawan\thanks{University of California, San Diego, \href{mailto:kegunawan@ucsd.edu}{kegunawan@ucsd.edu}} \and John Peca-Medlin\thanks{Department of Mathematics, University of California, San Diego, \href{mailto:jpecamedlin@ucsd.edu}{jpecamedlin@ucsd.edu}} \and Chenyang Zhong\thanks{Department of Industrial Engineering and Operations Research, University of California, Berkeley, \href{mailto:cyzhong@berkeley.edu}{cyzhong@berkeley.edu}}}
\date{}

\begin{document}

\maketitle

\begin{abstract}
    Gaussian elimination with partial pivoting (GEPP) remains the most widely used solver for dense linear systems $A \mathbf x = \mathbf b$ for $A \in \mathbb C^{n\times n}$. We study the permutation $\pi = \pi(A)$ that arises in the GEPP factorization $PA = LU$, encoded by the permutation matrix factor $P = P_\pi$. When the input matrix is random, so is $\pi$. For random scalar butterfly matrices of size $2^n$ (a recursively defined family originally introduced to eliminate the need for pivoting altogether), we give the exact GEPP factorization and fully classify the induced permutation as an element of a $2$-Sylow subgroup of $S_{2^n}$ contained in the separable permutations. Moreover, the uniform-angle model induces the uniform distribution on this subgroup. For the GOE, GUE, and iid Bernoulli models, the induced permutation is never exactly uniform for $n \ge 2$. We give the precise rate of departure from uniformity at the leading pivot for the GOE and GUE, and give evidence that this non-uniformity vanishes asymptotically in the permuton sense. In contrast, for banded random matrices of sublinear bandwidth, including the tridiagonal $\beta$-Hermite ensembles, the induced permutation converges to the diagonal permuton. We further show that the resulting pivot probabilities are sensitive to implementation choices: standard LAPACK routines compare complex pivot candidates using the $\ell^1$ rather than $\ell^2$ norm, changing the GUE(2) pivot probability from $1/\sqrt3$ to $2/3$. Together these results establish a new connection between random matrix theory and permutation combinatorics through numerical linear algebra.
\end{abstract}

\section{Introduction}
\label{sec:intro}

Gaussian elimination (GE) with partial pivoting (GEPP) is the most widely used solver for dense linear systems $A \V x = \V b$ for $A \in \mathbb R^{n \times n}$. For instance, GEPP is the default method for MATLAB's `\texttt{\textbackslash}' operator for generic dense systems and the standard solver in NumPy's linear algebra module. As such, it remains a staple in introductory linear algebra courses. GEPP iteratively transforms the system into upper triangular form via rank-1 updates, with row swaps applied if needed before each elimination step to ensure the pivot is maximal in magnitude in the leading column of the remaining untriangularized system. GEPP results in the matrix factorization $PA = LU$ in $\mathcal O(n^3)$ FLOPs, where $L$ is lower triangular, $U$ upper triangular, and $P = P_\pi$ a permutation matrix that encodes each of the row swaps used throughout. The induced permutation $\pi = \pi(A) \in S_n$ is iteratively formed in $n-1$ steps as a product of transpositions,
\begin{equation}\label{eq:GEPP perm}
    \pi(A) = (n-1 \ i_{n-1}) \cdots (2 \ i_2)(1 \ i_1), \qquad i_k = \min\left\{\argmax_{j \ge k}|A_{jk}^{(k)}|\right\},
\end{equation}
where $A^{(k)}$ is the intermediate matrix after $k-1$ elimination steps. If the active column is zero, we set \(i_k=k\), set the corresponding multipliers to zero, and skip the elimination update. Note that $i_k \ge k$ by construction. Although we formulate the pivot rule above for real matrices, it extends to complex (or quaternion) valued matrices once a norm is specified for evaluating entries as vectors in the corresponding real coordinate space; so different choices of norm can  produce different pivoting permutations.

In \cite{P24}, the second author studied the question of how many GEPP row swaps are needed for random matrices. This is equivalent to studying the number of cycles in the disjoint cycle decomposition of the permutation derived from the $P$ factor (or equivalently the number of orbits under the permutation action): a pivot movement is only needed if $i_k > k$, which corresponds to when $k$ is a non-maximal element in its disjoint cycle decomposition. In particular, we note $i_k = k$ then aligns precisely with when $k$ is maximal in its cycle (orbit) and can thus be used to count the number of cycles. 

That cycle-counting question is the starting point for the present paper, but it only sees the permutation $\pi(A)$ through a single numerical statistic. Here we take up the finer question of the permutation itself, asking which random matrix models yield permutations that are exactly or asymptotically uniform. For the random butterfly matrices studied in \cite{P24_gecp,P24,PT23,Tr19}, we ask which permutations $\pi(A)$ can arise and with what law. We additionally explore the impact of the choice of norm for pivot comparisons for non-real matrices.

\subsection{Main results.}

We outline the main contributions of the current text below.
\begin{itemize}
    \item \emph{Non-uniformity for classical ensembles, and its conjectured asymptotic vanishing.} For $A \in \mathbb C^{n \times n}$ drawn from the GOE, GUE, or an iid Bernoulli model, the induced GEPP permutation $\pi(A)$ is never exactly uniform for $n\ge 2$. We give the precise rate at which the leading pivot probability for $\GOE(n)$ and $\GUE(n)$ departs from $1/n$ (\Cref{p: goe pn,p: gue pn}), and, based on permuton-level numerical evidence, conjecture that this non-uniformity nonetheless vanishes as $n \to \infty$ for random Wigner and iid matrix models (Conjecture~\ref{conj:goe-permuton}).

    \item \emph{Exact factorization.} We give the closed-form GEPP factorization $PB = LU$ for general (nonsimple scalar) butterfly matrices $B$ (\Cref{p:ns factor}), extending the simple butterfly matrix case of \cite{PT23} to the full recursive family.

    \item \emph{Classification of the induced permutation group.} We show the permutations arising this way form a $2$-Sylow subgroup $B_n \subset S_{2^n}$ of separable permutations (i.e., avoiding the patterns $2413$ and $3142$), and that the uniform-angle model induces the uniform (Haar) measure on $B_n$ (\Cref{thm: unif ns}).

    \item \emph{Norm sensitivity in pivot comparisons.} We give a general formula for the $\GUE(n)$ no initial pivot probability under both the $\ell^1$ and $\ell^2$ norms used for complex pivot comparisons (\Cref{p: gue pn}), showing the two conventions produce genuinely different asymptotics. At $n=2$ this recovers, as a corollary, the discrepancy between the $\ell^2$-based value $1/\sqrt3$ and the value $2/3$ produced by LAPACK's $\ell^1$ convention (\Cref{sec:discrepancy}).

    \item \emph{Banded matrices and the diagonal permuton.} For the tridiagonal $\beta$-Hermite ensembles $H_\beta$, we compute the no initial pivot probability exactly and show it decays exponentially in $n$ for every $\beta>0$ (\Cref{p: beta pivot}). More generally, for any random matrix of sublinear bandwidth, the GEPP permutation converges to the diagonal permuton (\Cref{t: banded permuton}).
\end{itemize}

\subsection{Outline.} \Cref{sec:background} recalls the classical GEPP-uniformity results, introduces the permuton perspective, and gives the first quantitative departures from uniformity for $\GOE(n)$ together with a conjectured asymptotic uniformity. \Cref{sec:factorization} gives the exact GEPP factorization and the permutation-group classification of butterfly permutations, its diagonal-butterfly analogue (\Cref{sec: diagonal}), and a growth-factor conjecture (\Cref{sec:growth}). \Cref{sec:bias} revisits the first pivot computations for the tridiagonal $\beta$-Hermite ensembles (\Cref{p: beta pivot}) and shows that for banded matrices of sublinear bandwidth the GEPP permutation converges to the diagonal permuton (\Cref{t: banded permuton}), in contrast to the conjectured Lebesgue limit for the dense ensembles. It then isolates the role of the norm used for complex pivot comparisons, including a general formula for the $\GUE(n)$ pivot probability under each norm (\Cref{p: gue pn}), with the $\GUE(2)$/LAPACK discrepancy following as a corollary (\Cref{sec:discrepancy}). \Cref{sec:discussion} closes with open problems.

\section{Uniform permutations from GEPP}
\label{sec:background}

The study of random permutations started with uniform permutations. In 1938, Fisher and Yates in \cite{FY38} provided an algorithm to produce a uniform permutation by iteratively sampling a uniformly chosen remaining index to flip starting from a fixed initial permutation. Starting with the identity, then this aligns to produce a permutation in the form \eqref{eq:GEPP perm} to yield:
\begin{theorem}[\cite{FY38}]\label{t: fy}
    If $i_k \sim \Unif(\{k,k+1,\ldots,n\})$ are independent for $k = 1,\ldots,n$, then $(n \ i_n) \cdots$ $(2 \ i_2)(1 \ i_1) \sim \Unif(S_n)$.
\end{theorem}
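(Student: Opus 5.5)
The plan is to show that the map
\[
\Phi:\{1,\ldots,n\}\times\{2,\ldots,n\}\times\cdots\times\{n\}\to S_n,\qquad (i_1,\ldots,i_n)\mapsto (n\ i_n)\cdots(2\ i_2)(1\ i_1),
\]
is a bijection. Once this is known, the product measure of independent uniforms on the domain pushes forward to the uniform measure on $S_n$. Indeed, each tuple has probability $\prod_{k=1}^n \frac{1}{n-k+1} = \frac{1}{n!}$, and a bijection carries this to mass $1/n!$ on each permutation. The last factor $(n\ i_n)$ is forced to be the identity, so it plays no role.

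Since both sets have cardinality $n!$, it suffices to prove that $\Phi$ is injective, or equivalently surjective. I will prove surjectivity by induction on $n$, reading off the indices from $\sigma = \Phi(i_1,\ldots,i_n)$ one at a time.

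The key observation is that the transpositions $(k\ i_k)$ with $k \ge 2$ only move points in $\{2,\ldots,n\}$. Let $\tau = (n\ i_n)\cdots(2\ i_2)$. Then $\sigma(1) = \tau(i_1)$, and $\sigma^{-1}(1) = (1\ i_1)\tau^{-1}(1) = (1\ i_1)(1) = i_1$. So $i_1 = \sigma^{-1}(1)$ is determined by $\sigma$.

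Given any $\sigma \in S_n$, I set $i_1 = \sigma^{-1}(1)$. Then $\tau := \sigma (1\ i_1)$ satisfies $\tau(1) = \sigma(i_1) = 1$, so $\tau$ fixes $1$. It can therefore be regarded as a permutation of $\{2,\ldots,n\}$. By the induction hypothesis, applied to the $n-1$ symbols $\{2,\ldots,n\}$ with the indexing shifted, $\tau = (n\ i_n)\cdots(2\ i_2)$ for some $i_k \in \{k,\ldots,n\}$. Hence $\sigma = \tau (1\ i_1) = \Phi(i_1,\ldots,i_n)$.

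The base case $n=1$ is trivial. This simultaneously gives uniqueness, since each $i_k$ was forced, so $\Phi$ is a bijection.

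The only step needing care is the composition convention: which side the transpositions act on, and hence whether $i_1 = \sigma^{-1}(1)$ or $\sigma(1)$. Either convention works, because the bijection argument is symmetric. I would fix the convention that products act right to left, and check the identity $\sigma(i_1) = 1$ explicitly. Everything else is routine counting.
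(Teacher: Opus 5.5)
Your proof is correct. It uses the same underlying decomposition as the paper but a different probabilistic mechanism.

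The paper iterates the Diaconis--Shahshahani Subgroup algorithm (\Cref{thm: subgroup alg}). It identifies $S_{n-1}$ with the stabilizer of $1$ in $S_n$ and takes $(1\ i_1)$, with $i_1\sim\Unif([n])$, as a uniform representative of the right cosets $S_{n-1}\backslash S_n$. This is independent of a uniform element of $S_{n-1}$ supplied by induction, so the product is uniform.

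Your step ``set $i_1=\sigma^{-1}(1)$; then $\sigma(1\ i_1)$ fixes $1$'' is exactly the check that the transpositions $(1\ i)$, $i\in[n]$, form a complete system of distinct right coset representatives. The paper takes this for granted, since $S_{n-1}(1\ i)=\{\sigma:\sigma(i)=1\}$. You then finish by pushing the product measure forward through the bijection $\Phi$, rather than invoking the Subgroup algorithm.

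Your route is fully elementary and self-contained. It also yields a stronger combinatorial fact: every $\sigma\in S_n$ has a unique factorization $(n\ i_n)\cdots(1\ i_1)$ with $i_k\ge k$, a normal form for $S_n$, from which uniformity is immediate.

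The paper's route is shorter given the cited theorem, and the technique applies to compact groups with Haar measure. It is also the template the paper reuses for \Cref{thm: unif ns}, where $\pi_\theta\otimes 1$ serves as a uniform coset representative of $B_n\oplus B_n$ in $B_{n+1}$.
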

A proof for this can be succinctly realized as an iterative application of the Subgroup algorithm of Diaconis and Shahshahani in \cite{DiSh87}:
\begin{theorem}[\cite{DiSh87}]\label{thm: subgroup alg}
Let $G$ be a compact group and $H$ a closed subgroup of $G$. Let
$x \sim \Unif(G/H)$ be uniform on the left cosets of $H$ in $G$, let
$y \sim \Unif(H\backslash G)$ be uniform on the right cosets of $H$ in
$G$, and let $z \sim \Unif(H)$ be independent of $x$ and $y$. Then
\(        xz,zy \sim \Unif(G).
   \)
\end{theorem}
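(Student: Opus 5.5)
To prove the subgroup algorithm statement (\Cref{thm: subgroup alg}), I will work with the normalized Haar measure $\mu_G$ on $G$ and $\mu_H$ on $H$, and the induced measure on the coset space. The plan is to show that the law of $xz$ is invariant under left translation by every $g \in G$. By uniqueness of normalized Haar measure on a compact group, that identifies the law as $\Unif(G)$. The statement for $zy$ then follows by the mirror argument with right cosets, or by applying the inversion map $g\mapsto g^{-1}$, which interchanges left and right cosets and preserves Haar measure on compact (unimodular) groups.

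First I make sense of "$xz$" when $x$ is a coset. Interpret $x$ as $\sigma(\bar x)$, where $\bar x \sim \Unif(G/H)$ and $\sigma: G/H \to G$ is a measurable section; the uniform law on $G/H$ is the pushforward $q_*\mu_G$ under the quotient map $q$. The key claim is that the law of $\sigma(\bar x) z$ does not depend on the choice of section. The reason is that any two sections differ by right multiplication by some $h(\bar x)\in H$, and right-invariance of $\mu_H$ gives $h z \stackrel{d}{=} z$ conditionally on $\bar x$. For $f$ a bounded continuous function on $G$, this means $\mathbb E f(xz) = \int_{G/H}\int_H f(\sigma(c)h)\,d\mu_H(h)\,dq_*\mu_G(c)$, and the inner integral $F(c)$ is a well-defined function of the coset $c$ alone.

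Next I compare with Haar measure directly. Since $\mu_G$ is right $H$-invariant, averaging over $H$ gives
\[
\int_G f\,d\mu_G=\int_G\int_H f(gh)\,d\mu_H(h)\,d\mu_G(g)=\int_G F(q(g))\,d\mu_G(g)=\int_{G/H}F\,d(q_*\mu_G).
\]
This equals $\mathbb E f(xz)$, so $xz\sim \mu_G$ directly. In this form the left-invariance step is not even needed, and the identity above is the whole argument. For $zy$, I apply the same computation using left $H$-invariance of $\mu_G$ and right cosets.

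The main obstacle is measure-theoretic bookkeeping rather than algebra. One needs existence of a measurable section, and the fact that $F$ is Borel on $G/H$. The cleanest way to sidestep sections is to define the joint construction as $x=g$ with $g\sim\mu_G$, which is a valid representative of the uniform coset. Independence of $z$ and Fubini then handle everything. In the application the paper needs (finite $S_n$ with $H$ a point stabilizer, iterated to give \Cref{t: fy}), all of this is simply finite counting.
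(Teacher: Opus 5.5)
The paper does not prove this statement: it is quoted from \cite{DiSh87} and applied only to finite groups. There $z$ is uniform on the stabilizer $S_{n-1}$ or on $B_n\oplus B_n$, the right-coset representatives are explicit ($y=(1\ i_1)$ or $y=\pi_\theta\otimes 1$), and the claim reduces to counting. So there is no in-paper argument to match against. Your proposal is a correct, self-contained proof.

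Note that what you execute is not the plan you announce. The announced plan is to show the law of $xz$ is invariant under left translation by $G$ and invoke uniqueness of Haar measure. That is the textbook route, and it requires tracking how $g\sigma(c)$ differs from $\sigma(gc)$ by an $H$-valued cocycle. What you actually do is disintegrate $\mu_G$ over $G/H$ directly: right $H$-invariance of $\mu_G$ plus Fubini give $\int_G f\,d\mu_G=\int_{G/H}F\,d(q_*\mu_G)=\mathbb E f(xz)$. This is shorter and needs no uniqueness theorem beyond identifying $\Unif(G/H)$ with $q_*\mu_G$. The invariance route is more conceptual but carries more bookkeeping.

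Two small corrections.

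\emph{Invariance direction.} Independence of $F(c)$ from the choice of representative uses \emph{left} invariance of $\mu_H$ ($h_0 z\stackrel{d}{=}z$), not right invariance. This is harmless, since compact groups are unimodular and Haar measure is bi-invariant.

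\emph{The section-free sidestep.} Taking $x=g$ with $g\sim\mu_G$ proves only a trivial special case. It would not cover the paper's uses, where the coset representative is a specific one such as $(1\ i_1)$. You do not need it. For any $G$-valued random $x$ independent of $z$ with $q(x)\sim q_*\mu_G$, Fubini gives $\mathbb E f(xz)=\mathbb E\,F(q(x))$ with no section at all. Moreover, $F$ is continuous on $G/H$: the map $g\mapsto\int_H f(gh)\,d\mu_H(h)$ is continuous by uniform continuity of $f$ on compact $G$, and it is constant on left cosets. The mirror statement handles $zy$. So the measure-theoretic obstacle you flag (existence of measurable sections and Borel measurability of $F$) does not actually arise.
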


\noindent Hence, if we view $S_{n-1} \subset S_n$ by identifying $S_{n-1}$ with the group of permutations of $[n]$ that fix 1, then for $i_1 \sim \Unif([n])$ and $\pi \sim \Unif(S_{n-1})$ we have $(1 \ i_1) \sim \Unif(S_{n-1} \backslash S_{n})$ as a coset representative, independently of $\pi$, and so $\pi (1 \ i_1) \sim \Unif(S_n)$. The left image in \Cref{fig:unif permuton} shows the diagram of pairs $(j,\pi(j))/n \subset [0,1]^2$ for $\pi_n \sim \Unif(S_n)$ for $n = 10^4$; as $n$ grows, this diagram will progressively fill in the entire unit box uniformly (e.g., see \cite{Borga21}). This can equivalently be stated as for $\pi_n \sim \Unif(S_n)$, then the measure
\begin{equation}\nonumber
    \label{eq: diag meas}
    \mu_{\pi_n} = \frac1n \sum_{j=1}^n \delta_{(j/n,\pi_n(j)/n)},
\end{equation}
which places equal weights at each of the points $(j,\pi_n(j))$ scaled to fit inside the unit square, limits weakly to the Lebesgue measure. Note if $\pi_n$ is random, then $\mu_{\pi_n}$ is a random measure. We note, however, $\mu_{\pi_n}$ is not a \textit{permuton}, which is a measure on the unit square with uniform marginals. Every permutation does yield a permuton: we can construct the corresponding permuton
\[
\hat \mu_{\pi_n} = n\sum_{j=1}^n \mathds 1_{A_j} \ dxdy, \qquad A_j = [(j-1)/n,j/n) \times [(\pi_n(j)-1)/n,\pi_n(j)/n).
\]
However, a straightforward check shows weak limiting (in probability) behavior of $\mu_{\pi_n}$ and $\hat \mu_{\pi_n}$ necessarily align, where we note a limit (if it exists) must itself be a permuton (since $\hat \mu_{\pi_n}$ is always a permuton and has uniform marginals). As seen in \Cref{fig:unif permuton}, a uniform permutation progressively fills in the unit square as the size grows. This contrasts with the right image in \Cref{fig:unif permuton}, which shows the corresponding fractal random diagram plot for a uniformly random \textit{separable} permutation. Separable permutations comprise a class of pattern-avoiding permutations, which avoid the patterns $2413$ and $3142$; equivalently, these are permutations built using only direct and skew sums ($\oplus/\ominus)$ of smaller separable permutations. Uniformly random separable permutations yield limiting Brownian separable permutons, which belong to a broader class of permutons with connections to Liouville quantum gravity and Schramm-Loewner evolutions \cite{bassino2018brownian,borga2023baxter,Borga2023,BorgaGwynneSun2025}.

\begin{figure}
    \centering
    \includegraphics[height=5cm]{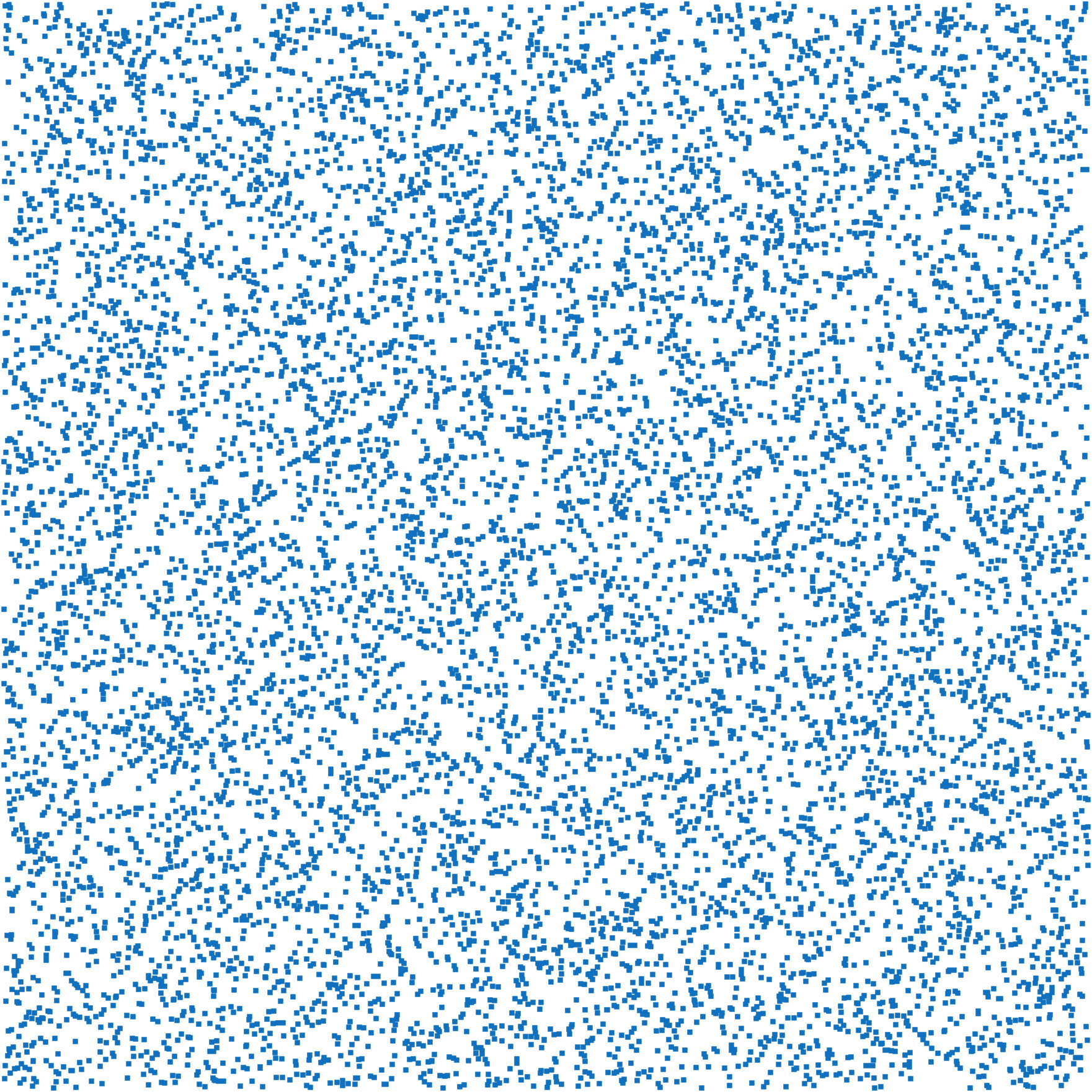}
    \quad
    \includegraphics[height=5cm]{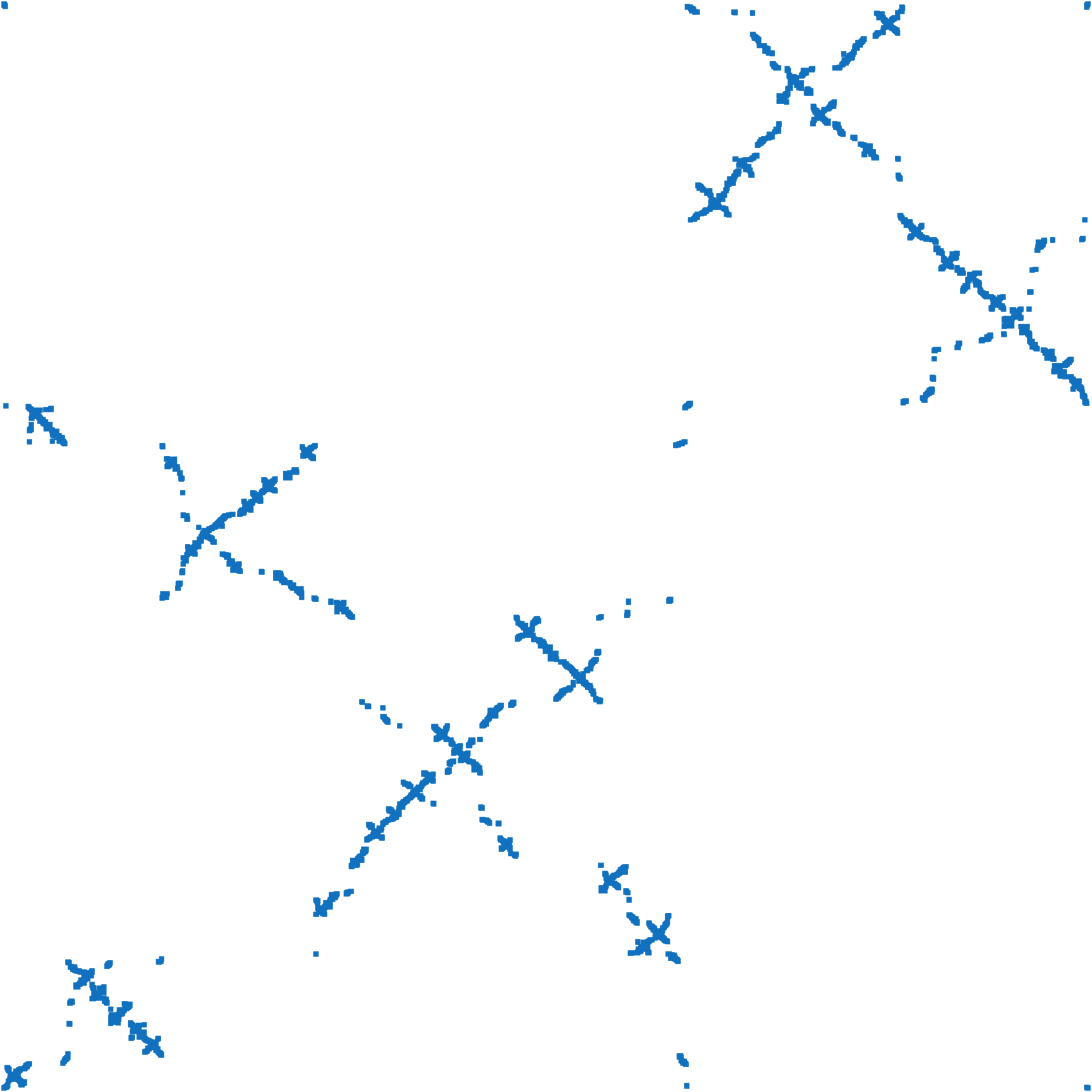}
    \caption{Diagram of pairs $(j,\pi(j))/n$ for $\pi \sim \Unif(S_n)$ (left) and a uniformly random separable permutation $\pi$ of order $n$ (right), for $n = 10^4.$}
    \label{fig:unif permuton}
\end{figure}

A natural question then is what type of random matrices produce uniformly random permutations using GEPP? In \cite{P24}, the second author established sufficient (but not necessary) conditions on $A$ such that $\pi(A) \sim \Unif(S_n)$.

\begin{theorem}[\cite{P24}]
\label{thm:unif_perm}
    If the first $n-1$ columns of $A \in \mathbb C^{n\times n}$ are independent and each column has independent and identically distributed (iid) entries from an absolutely continuous distribution, then $\pi(A) \sim \Unif(S_n)$.
\end{theorem}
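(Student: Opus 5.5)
The plan is to show, by induction on the elimination steps, that the pivot indices $i_1,\ldots,i_{n-1}$ are independent with $i_k \sim \Unif(\{k,\ldots,n\})$. Then \Cref{t: fy} gives $\pi(A)\sim\Unif(S_n)$ directly; the factor $(n\ i_n)$ there is trivial since $i_n=n$. Equivalently, at each step we peel off a uniform coset representative $(k\ i_k)$ and invoke \Cref{thm: subgroup alg}. Absolute continuity guarantees that ties and zero active columns occur with probability zero, so the $\min\argmax$ convention in \eqref{eq:GEPP perm} is irrelevant almost surely.

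\textbf{Step 1.} The first column has iid absolutely continuous entries, so it is exchangeable and almost surely has a unique maximal-modulus entry. Its index $i_1$ is therefore uniform on $[n]$. The same reasoning applies conditionally on any event measurable with respect to the other columns.

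\textbf{Step 2 (the key structural step).} Condition on the first column $\V a_1$; this fixes $i_1$ and the multipliers $\ell_j = a_{j1}/a_{i_1 1}$. After the swap and one elimination step, entry $j$ of the new column $k\ge2$, for rows $j\ge2$, is $a_{\sigma(j)k} - \ell_{\sigma(j)} a_{i_1 k}$, where $\sigma=(1\ i_1)$. Columns $2,\ldots,n-1$ are independent of $\V a_1$ and of each other, so after conditioning they stay independent with their original laws. The difficulty is that the updated entries in a column are no longer iid: they share the common term $a_{i_1k}$ and carry distinct multipliers.

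\textbf{Step 3 (the main obstacle).} Rather than preserving the full iid hypothesis, I would preserve a weaker property suited to the induction: conditioned on everything determined so far, each remaining column has a law that is invariant under permutations of the remaining rows, up to the effect of the previously determined data. The natural tool is a conditioning-on-order-statistics trick. Condition on the \emph{unordered multiset} of the other entries of $\V a_1$ rather than their positions. Given $i_1$, the assignment of these values to the rows in $\{2,\ldots,n\}$ is a uniform random bijection, independent of columns $2,\ldots,n$. Consequently the reduced $(n-1)\times(n-1)$ matrix is the image, under a uniform random row permutation independent of everything else, of a matrix whose relevant columns are independent and absolutely continuous. The argmax in the next active column, taken over a uniformly randomly permuted row set, is then uniform on $\{2,\ldots,n\}$ and independent of $i_1$. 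Absolute continuity of the updated entries holds because each is a nonzero affine shift of an absolutely continuous variable independent of the shift.

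\textbf{Step 4.} Iterating Step 3, the random row relabelling produced at each step absorbs the dependence created by the elimination. This gives the conditional uniformity of $i_k$ given $i_1,\ldots,i_{k-1}$, and hence the required independence. No hypothesis on column $n$ is needed, because only $n-1$ pivot choices are made.

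I expect the hard part to be making Step 3 rigorous. One must state precisely which exchangeability is preserved: it is the joint law of the remaining rows across all columns, after randomizing over the positions of column $1$'s non-pivot values. Only once that is in place can the induction hypothesis be formulated in a form that reproduces itself.
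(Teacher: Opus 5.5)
\textbf{There is a genuine gap at Steps 3--4, exactly where you flag it: the induction is never closed.}

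Step 3 is sound for the first elimination. Because $\V a_1$ is independent of the other columns, condition on $i_1$, the pivot value and the multiset of the remaining entries of $\V a_1$. Their positions are then a uniform bijection, independent of columns $2,\ldots,n$. Since those columns are themselves row-exchangeable, you can write the Schur complement as $P_\tau M$ with $\tau\sim\Unif(S_{n-1})$ independent of $M$ and of $i_1$. But you only extract the law of $i_2$ from this.

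To ``iterate Step 3'' you would need its hypotheses for the Schur complement, and they are gone. Entry $(j,k)$ is $a_{\sigma(j)k}-\ell_{\sigma(j)}a_{i_1k}$. So every entry of row $j$ carries the same multiplier $\ell_{\sigma(j)}$, and every row involves the same pivot-row entries $a_{i_1k}$. The new leading column therefore has dependent entries and is not independent of the other active columns. Conditioning on its multiset of values no longer leaves a position assignment independent of the rest. The conditional uniformity of $i_k$ given $i_1,\ldots,i_{k-1}$ for $k\ge3$ is thus asserted rather than proved, and the column-based device cannot supply it.

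The missing ingredient is to work with whole rows and with the row-permutation equivariance of GEPP. Under the hypothesis, all entries of the first $n-1$ columns are mutually independent, with laws depending only on the column. So the rows of $[\V a_1\ \cdots\ \V a_{n-1}]$ are iid random vectors, and $P_\rho A$ has the same law as $A$ on these columns for each fixed $\rho\in S_n$; the last column never influences pivoting.

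Absolute continuity makes ties and zero active columns null events. Conditionally on columns $1,\ldots,k-1$, each active entry of column $k$ is an independent absolutely continuous $a_{rk}$ plus a shift. Off that null set, pivot choices depend only on row data, not row positions, so $\pi(P_\rho A)=\pi(A)\rho^{-1}$. This is the general form of the observation in \Cref{sec: diagonal}: if $PA=LU$ is tie-free, then $(PQ^\top)(QA)=LU$ is the GEPP factorization of $QA$. Taking $\rho\sim\Unif(S_n)$ independent of $A$ gives $\pi(A)\stackrel{d}{=}\pi(A)\rho^{-1}\sim\Unif(S_n)$, with no induction at all.

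Your own representation $P_\tau M$ from Step 3 also finishes the proof. Equivariance makes the reduced permutation equal to $\pi(M)\tau^{-1}$, which is uniform on $S_{n-1}$ and independent of $i_1$, and \Cref{thm: subgroup alg} concludes.

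If you prefer a step-by-step induction, the invariant that actually reproduces itself is that the active rows are iid (or merely exchangeable) vectors. Given $i_1$ and the pivot row $\V p$, the other rows are iid conditioned on $|x_1|<|p_1|$ (in the chosen pivot norm). They are all pushed through the same map $(x_2,\ldots,x_{n-1})\mapsto(x_2,\ldots,x_{n-1})-(x_1/p_1)(p_2,\ldots,p_{n-1})$. So the Schur complement again has iid rows, with a joint law not depending on $i_1$.

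The paper itself quotes this theorem from \cite{P24} without reproducing a proof, so there is no in-paper argument to compare against beyond these recorded ingredients.
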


\noindent Additional connections of GEPP and QR factorizations (along with the Subgroup algorithm), yield other random matrix transformations that also yield uniform GEPP permutations. For instance, if nonsingular \(A \in \mathbb R^{n \times n}\) has GEPP factorization \(PA = LU\) and QR factorization \(A = QR\) (for \(R\) upper triangular with positive diagonal entries), then
$$
PQ = L\widetilde U,\qquad \widetilde U = UR^{-1},
$$
is the GEPP factorization of \(Q\). Thus, \(\pi(Q)=\pi(A)\). Combining this observation with Stewart's method in \cite{stewart} of sampling Haar orthogonal (unitary) matrices as the orthogonal factor of the QR factorization of Ginibre matrices (with iid standard (complex) normal entries) (see also, e.g., \cite{Mezz}), together with \Cref{thm:unif_perm}, shows that Haar orthogonal (unitary) matrices have uniform GEPP permutation factors. The same invariance argument extends this to the following broader class:

\begin{corollary}[\cite{P24}]
\label{cor:unif_perm}
    Let $U,V \sim \Haar(G_n)$ be iid for $G_n$ one of the (special) orthogonal or unitary groups of order $n$. If $A = U $ or $A = UBV^*$ where $B$ is deterministic nonsingular (and real when $U,V$ are real), then $\pi(A) \sim \Unif(S_n)$.
\end{corollary}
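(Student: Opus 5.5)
The plan is to reduce both cases to \Cref{thm:unif_perm} through the QR observation, and then to use the bi-invariance of Haar measure. The key fact is that $\pi(A)$ depends only on the orthogonal (unitary) factor $Q$ in $A = QR$. To see this, write $PA = LU$. Then $PQ = L(UR^{-1})$, and $UR^{-1}$ is upper triangular. Since GEPP with a fixed tie-breaking rule yields a unique factorization $PQ = L\widetilde U$ with $L$ unit lower triangular and $|L_{ij}|\le 1$, we get $\pi(Q) = \pi(A)$. The reason is that the pivot choices at each step depend only on ratios within columns after right-multiplication by upper triangular matrices, and the column-$k$ active entries of $A^{(k)}$ and $Q^{(k)}$ differ by the positive scalar $R_{kk}$ (up to a unit phase in the complex case). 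I would check this carefully, step by step, by induction on $k$.

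\textbf{Case $A = U$.} Let $G$ be a Ginibre matrix (real or complex, to match $G_n$). By \Cref{thm:unif_perm}, $\pi(G) \sim \Unif(S_n)$, since its columns are independent with iid absolutely continuous entries. By Stewart's result, the $Q$ factor of $G$ is Haar distributed on $O(n)$ or $U(n)$, so $\pi(Q) = \pi(G)$ is uniform. For the special groups $\SO(n)$ or $SU(n)$, I would write a Haar element as $U = Q D$, where $D$ is a diagonal matrix with $\det D = \overline{\det Q}$, chosen independent of $Q$ and supported on the last coordinate. Right multiplication by a diagonal unitary only rescales columns and so does not change $\pi$; this gives $\pi(U) = \pi(Q)$. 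I would also need to verify that $QD$ is Haar on the special group. This follows because the law of $QD$ is invariant under left multiplication by the special group, and $QD$ has determinant $1$.

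\textbf{Case $A = UBV^*$.} Let $B$ be deterministic and nonsingular, and take the QR factorization $BV^* = Q_1R_1$; then $A = (UQ_1)R_1$. This is the main obstacle: $Q_1$ is random and depends on $V$, but it is independent of $U$. Conditionally on $V$, left invariance of Haar measure gives $UQ_1 \sim \Haar(G_n)$ when $Q_1 \in G_n$. In the special case, $Q_1$ may have determinant $\pm1$ (or a unit phase), so I would absorb a diagonal correction $D_1$, writing $Q_1R_1 = (Q_1D_1)(D_1^{-1}R_1)$ so that $Q_1D_1 \in G_n$. The factor $D_1^{-1}R_1$ is still upper triangular, and the QR invariance argument above works for any upper triangular nonsingular $R$, not only ones with positive diagonal. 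Hence, conditionally on $V$, $\pi(A) = \pi(UQ_1D_1)$, and by the first case this is $\Unif(S_n)$. Integrating over $V$ gives the claim.
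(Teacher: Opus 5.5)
Your proposal is correct and follows the paper's route: the invariance $\pi(QR)=\pi(Q)$ for upper triangular $R$, then Stewart's QR sampling of Haar matrices combined with \Cref{thm:unif_perm}, then left-invariance of Haar measure conditionally on $V$ after writing $BV^*=Q_1R_1$. Your last-coordinate diagonal correction for the special groups fills in a detail the paper leaves implicit; note that this $D$ is a function of $\det Q$, not independent of $Q$, and that is exactly why the check $\det(WQ)=\det Q$ makes the law of $QD$ left-invariant.
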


\noindent Of course, this is not comprehensive for \textit{all} random matrix models that yield uniform GEPP permutation factors: if $\sigma \sim \Unif(S_n)$, then $\pi(P_\sigma^\top) = \sigma$.

For some other random matrices that do not meet the hypotheses of \Cref{thm:unif_perm} or \Cref{cor:unif_perm}, then the resulting associated GEPP permutation is not necessarily uniform. This can be seen for simple  examples with $A \in \mathbb C^{2 \times 2}$, where $\pi(A) = (1 \ i_1)$ and $2-i_1 \in \{0,1\}$ is necessarily a Bernoulli random variable $\Bern(q)$ for $q = \P(i_1 = 1)$, i.e., the probability that no pivot movement is needed. Using $A \sim \GOE(2)$ or $\Bern(p)^{2\times 2}$ (an iid matrix) (cf. \cite[Examples 1 and 2]{P24}), then $q$ takes the respective values $\frac2\pi \arctan({\sqrt 2}) \approx 0.60817$ and $p+(1-p)^2=1-p(1-p) \ge 3/4$. (\Cref{sec:discrepancy} outlines this also for $\GUE(2)$, which highlights a sensitivity to the norm evaluation of the entry $|A_{ij}^{(k)}|$ in \eqref{eq:GEPP perm} for complex values.) This differs from the uniform permutation behavior guaranteed, say, if $A_{ij} \sim N(0,1)$ iid, where then $q=1/2$.

A natural question then for these models is what can be determined for the associated GEPP permutation. For fixed $n\geq 2$ for each model, the induced random permutation is never truly uniform. This can be shown by only considering $i_1$ from \eqref{eq:GEPP perm}. Note first the following characterization of the case when $i_1 = 1$, that occurs when no initial pivot movement is needed:

\begin{lemma}\label{l: fixed}
    Let $\pi \in S_n$ have form given by \eqref{eq:GEPP perm}. Then $i_1 = 1$ if and only if $1$ is a fixed point of $\pi$, i.e., $\pi(1) = 1$.
\end{lemma}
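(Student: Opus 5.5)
We compute $\pi(1)$ directly from the product form \eqref{eq:GEPP perm}, composing right to left (the rightmost transposition acts first). The key fact is that every transposition $(k\ i_k)$ with $k \ge 2$ moves only elements of $\{k, k+1, \ldots, n\} \subseteq \{2,\ldots,n\}$, since $i_k \ge k \ge 2$. So each such factor fixes $1$ and maps $\{2,\ldots,n\}$ into itself.

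Set $\sigma = (n-1\ i_{n-1})\cdots(2\ i_2)$, so that $\pi = \sigma\,(1\ i_1)$. By the fact above, $\sigma(1)=1$, and $\sigma$ restricts to a bijection of $\{2,\ldots,n\}$. Then $\pi(1) = \sigma(i_1)$.

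For the forward direction, suppose $i_1 = 1$. Then $(1\ i_1)$ is the identity, and $\pi(1) = \sigma(1) = 1$.

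For the converse, suppose $i_1 \neq 1$, so $i_1 \in \{2,\ldots,n\}$. Then $\pi(1) = \sigma(i_1) \in \{2,\ldots,n\}$, hence $\pi(1)\neq 1$. This is the contrapositive of the remaining direction.

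The only point needing care is the composition convention. If the product is instead read left to right, the same argument applies to $\pi^{-1}$. Since $1$ is a fixed point of $\pi$ exactly when it is a fixed point of $\pi^{-1}$, the conclusion holds under either convention. The same argument also covers the degenerate convention where $i_k = k$ for a zero column: such factors are the identity, and they still satisfy $i_k \ge k$.
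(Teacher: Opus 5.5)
Your argument is correct, but it takes a different route from the paper. The paper argues through cycle structure. It invokes the correspondence stated in the introduction (from the earlier cycle-counting work) that $i_k = k$ exactly when $k$ is the largest element of its cycle of $\pi$. Since $1$ is the smallest element of $[n]$, it can be the largest element of its cycle only if that cycle is $\{1\}$, i.e., $\pi(1)=1$.

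You instead compute directly. You write $\pi = \sigma\,(1\ i_1)$, where $\sigma$ fixes $1$ and preserves $\{2,\ldots,n\}$ because $i_k \ge k \ge 2$ for the remaining factors. Then $\pi(1) = \sigma(i_1)$, which equals $1$ if and only if $i_1 = 1$.

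What each route buys:
\begin{itemize}
\item Your proof is fully self-contained and uses nothing beyond $i_k \ge k$. The same decomposition also immediately gives the sharper identity $\pi^{-1}(1) = (1\ i_1)\,\sigma^{-1}(1) = i_1$. The paper uses exactly this identity right after the lemma to reduce uniformity of $\pi$ to uniformity of $i_1$. Your check of the composition convention is correct and worth keeping.
\item The paper's proof is shorter, but only given the cycle-maximum fact, which it does not reprove here. It places the lemma in the cycle-counting framework, which makes the following remark transparent: for $k>1$, being the maximum of its cycle does not force $k$ to be a fixed point, since smaller elements may share its orbit.
\end{itemize}
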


\begin{proof}
    We have $i_1 = 1$ if and only if $1$ is maximal in its corresponding cycle factor in the cycle decomposition of $\pi$ if and only if 1 is the only number in its $\pi$-orbit if and only if $\pi(1) = 1$.  
\end{proof}

\begin{remark}
    This characterization of $i_1 = 1$ corresponding to 1 being a fixed point does not extend to any other case $i_k = k$: we only need to consider the $k$-cycle $\pi = (1 \ 2 \ \cdots k) = (k-1 \ k)\cdots (2 \ k)(1 \ k) \in S_n$ for $n \ge k$, as $i_k = k$ but $\pi(k) = 1$.
\end{remark}

By the Fisher-Yates product order, we have $i_1 = \pi^{-1}(1)$; so if $\pi$ is uniform then necessarily $i_1 \in [n] := \{1,2,\ldots,n\}$ must also be uniform. Thus a sufficient check to show a permutation is not uniform is to show $i_1$ is not uniform on $[n]$. Generalizing the $n=2$ case, we can ask about the evaluation for 
\[
p_n = \P(i_1 = 1) = \P(\pi(A)(1)=1),
\]
for when no pivot movement is needed in the first GEPP elimination step for random $A$, using \Cref{l: fixed}. If $\pi(A) \sim \Unif(S_n)$, then $p_n = 1/n$. So again a sufficient rejection of uniformity is showing $p_n \ne 1/n$. 

We first show an iid Bernoulli matrix can never yield a uniform permutation:

\begin{proposition}
    Let $A \sim \operatorname{Bern}(p)^{n \times n}$ for $p \in [0,1]$ fixed  and $n\ge 2$. Then $p_n = \Theta(1)$ and $\pi(A) \not \sim \Unif(S_n)$.
\end{proposition}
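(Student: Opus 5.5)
Since $i_1$ depends only on the first column $(A_{11},\dots,A_{n1})$, which has iid $\Bern(p)$ entries, we can compute $p_n$ exactly. By \eqref{eq:GEPP perm}, $i_1$ is the smallest index attaining $\max_j |A_{j1}|$. If the column is zero we set $i_1=1$ by convention. So $i_1=1$ exactly when $A_{11}=1$, or when the whole first column vanishes. These two events are disjoint, which gives
\[
p_n = \P(A_{11}=1) + \P(A_{11}=\cdots=A_{n1}=0) = p + (1-p)^n .
\]
For $n=2$ this recovers the value $p+(1-p)^2$ quoted earlier.

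Next we show $p_n=\Theta(1)$. We have the upper bound $p_n\le p+(1-p)\le 1$. For the lower bound, note that for fixed $p\in[0,1]$ and $n\ge 2$,
\[
p_n \ge p + (1-p)^n \ge \min_{q\in[0,1]}\bigl(q+(1-q)^2\bigr).
\]
Here we use that $(1-p)^n$ is decreasing in $n$, so when $p=0$ we get $p_n=1$, and when $p>0$ we get $p_n\ge p$. A cleaner uniform bound is available: $(1-p)^n\ge 0$ gives $p_n\ge p$, while $(1-p)^n\ge (1-p)^n\cdot 1$ and $p\ge 0$ give $p_n\ge(1-p)^n$. Either way, $p_n$ is bounded below by a positive constant depending only on $p$, namely $p$ if $p>0$ and $1$ if $p=0$. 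Hence $p_n=\Theta(1)$ as $n\to\infty$.

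Finally we show non-uniformity. By \Cref{l: fixed}, $p_n=\P(\pi(A)(1)=1)$, and uniformity would force $p_n=1/n$. So it suffices to check $p+(1-p)^n> 1/n$ for every $p\in[0,1]$ and $n\ge2$. Let $f(p)=p+(1-p)^n$. Since $f'(p)=1-n(1-p)^{n-1}$, $f$ is convex on $[0,1]$ and has its minimum at $p^*=1-n^{-1/(n-1)}$. The minimum value is
\[
f(p^*) = 1 - n^{-1/(n-1)} + n^{-n/(n-1)} = 1-\Bigl(1-\tfrac1n\Bigr)n^{-1/(n-1)} .
\]
We need $f(p^*)>1/n$, which is equivalent to $(1-\tfrac1n)\bigl(1-n^{-1/(n-1)}\bigr)>0$. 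This holds for $n\ge2$, since then $n^{-1/(n-1)}<1$. Thus $p_n>1/n$, and $\pi(A)\not\sim\Unif(S_n)$.

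The only delicate points are the zero-column convention and verifying the exact minimum. The convexity computation above settles the latter.
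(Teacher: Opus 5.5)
Your proof is correct, but for non-uniformity it takes a different route from the paper. The paper never compares $p_n$ with $1/n$. It disposes of $p\in\{0,1\}$ via $p_n=1$. For $p\in(0,1)$ it computes a second atom, $\P(i_1=2)=\P(A_{11}=0,A_{21}=1)=p(1-p)$, so that $p_n-\P(i_1=2)=p^2+(1-p)^n>0$. Uniformity would instead force $\P(i_1=1)=\P(i_1=2)=1/n$, because $i_1=\pi^{-1}(1)$. That argument is purely algebraic and needs no optimization over $p$.

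You instead minimize $f(p)=p+(1-p)^n$ over $[0,1]$ by convexity and obtain the exact value $\min_{p}p_n=1-(1-\tfrac1n)n^{-1/(n-1)}$. This costs a short calculus computation, but it gives the sharper, uniform-in-$p$ statement $p_n>1/n$, matching the form of \Cref{p: goe pn}. In fact, since $(1-\tfrac1n)\tfrac{\log n}{n-1}=\tfrac{\log n}{n}$, your minimum equals $\tfrac{1+\log n}{n}+O\bigl((\log n)^2/n^2\bigr)$. So even the least favorable $p$ gives $p_n=\omega(1/n)$.

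One line should be deleted. In your $\Theta(1)$ paragraph, the display $p+(1-p)^n\ge\min_{q\in[0,1]}\bigl(q+(1-q)^2\bigr)$, whose right side equals $3/4$, is false for $n\ge3$. Monotonicity of $(1-p)^n$ in $n$ gives $p+(1-p)^n\le p+(1-p)^2$, which is the opposite direction; for instance, $p=1/2$ and $n=3$ give $p_3=5/8<3/4$. The bound you actually rely on afterward ($p_n=1$ if $p=0$, and $p_n\ge p$ if $p>0$) is correct and suffices for $p_n=\Theta(1)$.
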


\begin{proof}
    Note first
    \[
    p_n = \P(i_1=1)
        =\P(A_{11}=1\ \mbox{or}\ A_{j1}=0\ \forall j)
        =p+(1-p)^n = \Theta(1).
    \]
    If $p = 0,1$, then $p_n = 1 > 1/n$ for all $n \ge 2$. Now assume $p \in (0,1)$. We note if $\pi(A) \sim \Unif(S_n)$ then $\P(i_1 = 1) = \P(i_1 = 2) = 1/n$ so that $\P(i_1 = 1) - \P(i_1 = 2) = 0$. We now compute also
    \begin{align*}
        \P(i_1=2)
        &=\P(A_{11}=0,A_{21}=1)
        =(1-p)p = p - p^2,
    \end{align*}
    so that
    $$
        p_n-\P(i_1=2)
        =p^2+(1-p)^n \ge p^2>0.
    $$
    This establishes $\pi(A)$ can never be uniform on $S_n$.
\end{proof}

This shows for an iid Bernoulli matrix $A$, then $p_n = \Theta(1)$. For $A\sim\GOE(n)$, we instead have $p_n=o(1)$ but $p_n=\omega(1/n)$:

\begin{proposition}
\label{p: goe pn}
Let $p_n = \P(i_1 = 1)$ for $A \sim \GOE(n)$. Then, for $n\ge2$,
$$
    p_n > \frac1n, \qquad  \P(i_1=j)=\frac{1-p_n}{n-1},\qquad j=2,\ldots,n.
$$
In particular,  $\pi(A) \not \sim \Unif(S_n)$ for any $n\ge2$, and
\begin{equation}\label{eq: asym pn}
p_n \sim \frac{\pi^{1/4}/\sqrt 2}{\sqrt n (\log n)^{1/4}}
\approx \frac{0.941}{\sqrt n (\log n)^{1/4}}.
\end{equation}
\end{proposition}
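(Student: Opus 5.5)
The first column of $A\sim\GOE(n)$ consists of independent entries: $A_{11}\sim N(0,2)$ and $A_{j1}\sim N(0,1)$ for $j\ge 2$ (standard GOE normalization, diagonal variance twice the off-diagonal variance). Since $i_1$ depends only on this column, I would work with $X=A_{11}/\sqrt2\sim N(0,1)$ and iid $Y_2,\ldots,Y_n\sim N(0,1)$. Then $i_1=1$ exactly when $\sqrt2|X|>\max_j|Y_j|$; ties have probability zero.

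The off-diagonal entries $Y_2,\ldots,Y_n$ are exchangeable and independent of $X$, so $\P(i_1=j)$ is the same for all $j\ge2$. This gives $\P(i_1=j)=(1-p_n)/(n-1)$.

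For the inequality $p_n>1/n$, I would compare with the iid case. There $\P(|X|>\max_j|Y_j|)=1/n$ by symmetry. The event $\{\sqrt2|X|>\max|Y_j|\}$ strictly contains $\{|X|>\max|Y_j|\}$, and the difference has positive probability; for example, it contains $\{1<|X|<1.1,\ 1.2<|Y_2|<1.3,\ |Y_j|<1\ \forall j\ge 3\}$. Hence $p_n>1/n$. Non-uniformity then follows from the observation before the proposition, since uniformity would force $i_1=\pi^{-1}(1)$ to be uniform on $[n]$.

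For the asymptotics I would write
\[
p_n=\mathbb E\bigl[F(\sqrt2|X|)^{n-1}\bigr],\qquad F(t)=\P(|Y|\le t)=1-2\bar\Phi(t).
\]
Let $M_{n}=\max_{j}|Y_j|$, which concentrates at $t_n\approx\sqrt{2\log n}$. Then
\[
p_n\approx \P(|X|>t_n/\sqrt2)\approx 2\bar\Phi\bigl(\sqrt{\log n}\bigr).
\]
To make this precise I would apply Laplace's method to
\[
p_n=2\int_0^\infty \phi(x)\exp\bigl((n-1)\log(1-2\bar\Phi(\sqrt2 x))\bigr)\,dx .
\]
Substituting $x=\sqrt{\log n}+u/\sqrt{\log n}$ and using $2(n-1)\bar\Phi(\sqrt2x)\approx e^{-2u}/\sqrt{\pi\cdot 2\log n}\cdot\ldots$, the factor $F^{n-1}$ becomes a Gumbel-type cutoff $\exp(-c\,e^{-2u}/\sqrt{\log n})$. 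Meanwhile $\phi(x)\approx \phi(\sqrt{\log n})e^{-u}$. Integrating in $u$ gives $\Gamma(1/2)$-type constants and the power $(\log n)^{1/4}$ coming from the $\sqrt{\log n}$ prefactor in the cutoff.

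Concretely, I expect
\[
\int e^{-u}\exp(-a e^{-2u})\,du=\tfrac12\Gamma(\tfrac12)a^{-1/2}
\]
with $a\asymp n\cdot n^{-2}\cdots$, which after bookkeeping should give $\pi^{1/4}/(\sqrt2\sqrt n(\log n)^{1/4})$. The main obstacle is this bookkeeping, namely getting exactly the right prefactors from the Mills ratio $\bar\Phi(t)\sim\phi(t)/t$. I would verify it by computing the integral
\[
\int_0^\infty \phi(x)\exp\bigl(-2n\bar\Phi(\sqrt2x)\bigr)\,dx
\]
directly with the change of variables $s=2n\bar\Phi(\sqrt2x)$. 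Under this change, $\phi(x)\,dx$ becomes proportional to $s^{-1/2}\,ds$ times slowly varying factors: since $\phi(x)=\phi(\sqrt2x)^{1/2}(2\pi)^{-1/4}$ and $\phi(\sqrt2x)\approx \sqrt2x\,s/(2n)$, the result is $\Gamma(1/2)\sqrt{\cdot}$. Finally, I would justify replacing $\log(1-2\bar\Phi)$ by $-2\bar\Phi$ and the Mills asymptotics uniformly, using dominated convergence on the region $s\in(0,\infty)$; the contribution from small $x$ is negligible, being $O(e^{-n^{c}})$.
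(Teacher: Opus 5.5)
Your proposal is correct and follows essentially the paper's route. It uses the same reduction to an independent $N(0,2)$ entry against iid $N(0,1)$ entries in the first column, and the same comparison with the exchangeable case ($\sqrt2|X|$ replaced by $|X|$) to get $p_n>1/n$. Your explicit witness event, and your exchangeability derivation of $\P(i_1=j)=(1-p_n)/(n-1)$, which the paper leaves implicit, are welcome additions. For the asymptotics, your substitution $s=2n\bar\Phi(\sqrt2 x)$ is just $n$ times the paper's $s=\operatorname{erfc}(|X|)$, since $\operatorname{erfc}(y)=2\bar\Phi(\sqrt2 y)$. The Mills ratio stands in for the paper's inversion of $\operatorname{erfc}$, and a $\Gamma(1/2)$ integral replaces its Beta integral.

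The only blemish is in your Laplace sketch. There the cutoff is actually $\exp\bigl(-e^{-2u}/\sqrt{\pi\log n}\bigr)$, so $a=(\pi\log n)^{-1/2}$ with no power of $n$; the factor $n^{-1/2}$ comes from $\phi(\sqrt{\log n})$. Your direct substitution produces the correct constant $\pi^{1/4}/\sqrt2$ regardless.
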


\begin{proof}
Writing $A=(G+G^\top)/\sqrt2$ for a real Ginibre matrix
$G$ (with iid $N(0,1)$ entries) recovers the $\GOE(n)$
distribution. Thus,
$A_{11}\sim N(0,2)$ is independent of
$A_{j1}\stackrel{\mathrm{iid}}{\sim}N(0,1)$ for $j\ge2$.
Writing $A_{11}^2=2X_1$ and $A_{j1}^2=X_j$ for $X_j\stackrel{\mathrm{iid}}{\sim}\chi_1^2$, then
\[
    p_n
    =\P\left(2X_1\ge M\right), \qquad M = \max_{2\le j\le n}X_j.
\]
If $2X_1$ were replaced by $X_1$, exchangeability would give probability $1/n$. Since $2X_1>X_1$ almost surely, and
$$
\P\left(2X_1 \ge  M >  X_1\right)>0,
$$
we have
$$
    p_n
    =\P\left(2X_1\ge M\right) = \P(X_1 \ge M) + \P(2X_1 \ge M > X_1) > \P(X_1 \ge M) = \frac1n.
$$
This establishes $\pi(A) \not\sim \Unif(S_n)$.

We now turn to the asymptotics for $p_n$. Conditioning on $X_1=x$ and using independence and $\P(\chi_1^2\le 2x)=\operatorname{erf}(\sqrt{x})$ gives
\[
    p_n
    =\frac1{\sqrt{2\pi}}\int_0^\infty
    x^{-1/2}e^{-x/2}
    \operatorname{erf}(\sqrt{x})^{n-1}\,dx.
\]
Substituting $s=\operatorname{erfc}(\sqrt{x})=1-\operatorname{erf}(\sqrt{x})$,
so that $ds = -e^{-x}/\sqrt{\pi x} \ dx$, converts this to
\[
    p_n
    =\frac1{\sqrt2}\int_0^1
    (1-s)^{n-1}e^{[\operatorname{erfc}^{-1}(s)]^2/2}\,ds.
\]
Since $\operatorname{erfc}(x)\sim e^{-x^2}/(x\sqrt\pi)$
as $x\to\infty$, inverting gives $[\operatorname{erfc}^{-1}(s)]^2
    =\log(1/s)-\frac12\log\log(1/s)
    -\frac12\log\pi+o(1)$ as $s\downarrow0$, and hence $e^{[\operatorname{erfc}^{-1}(s)]^2/2}
    \sim \pi^{-1/4}s^{-1/2}[\log(1/s)]^{-1/4}$. The factor $(1-s)^{n-1}$ localizes the integral near $s=1/n$,
where $[\log(1/s)]^{-1/4}\sim(\log n)^{-1/4}$ varies slowly
enough to pull out of the integral, leaving the Beta integral
\[
    \int_0^1(1-s)^{n-1}s^{-1/2}\,ds
    =B\left(\frac12,n\right)
    \sim\sqrt\pi\,n^{-1/2}.
\]
Combining constants yields \eqref{eq: asym pn}.
\end{proof}

\Cref{p: goe pn} shows scaling a single entry from the leading column by $\sqrt 2$ suffices to push the asymptotics for $p_n$ outside those of the uniform permutations.

A natural followup direction then could ask whether $\pi(A)$ can be asymptotically uniform in a global sense. This can then be addressed by determining whether the measure $\mu_{\pi_n}$ for GEPP induced random permutations $\pi_n$ weakly limits to the Lebesgue measure on the unit square, as seen in \Cref{fig:unif permuton}. \Cref{fig:goe bern permuton} shows scaled diagrams for a GEPP induced permutation $\pi(A)$ using $A \sim \GOE(n), \GUE(n),\Bern(1/2)^{n \times n}$ for $n = 10^4$ (with the GUE induced permutation using the $\ell^1$ norm evaluation for pivot comparisons; see \Cref{sec:discrepancy}), which suggests these limits converge to the Lebesgue measure on the unit square. Hence, although the local statistic for $p_n$ differs from that for uniform permutations for finite $n$, this image suggests this discrepancy may vanish in the global geometry as $n$ increases. $\GOE(n)$ and $\GUE(n)$ comprise standard Gaussian ensembles that fit the Wigner structure, which are random Hermitian matrices with iid entries with unit variance above the diagonal. $\Bern(p)^{n \times n}$ represents an iid ensemble outside the exact uniform case (unlike Ginibre).

This suggests the following broader conjecture: for broad classes of random matrix ensembles, the permutation induced by GEPP has a universal global geometry, even when its finite-\(n\) distribution is not uniform.

\begin{figure}
    \centering
    \includegraphics[height=5cm]{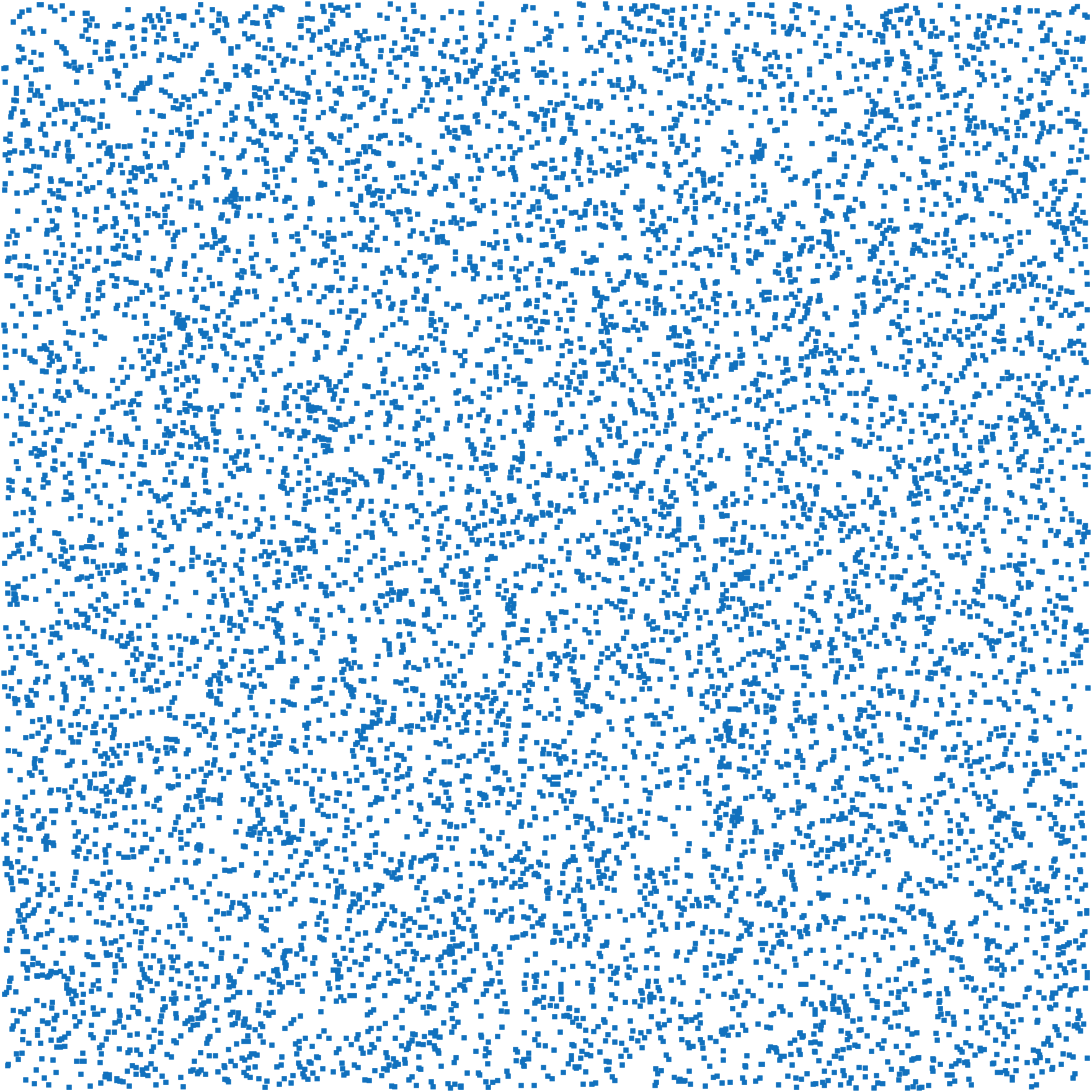} \quad
    \includegraphics[height=5cm]{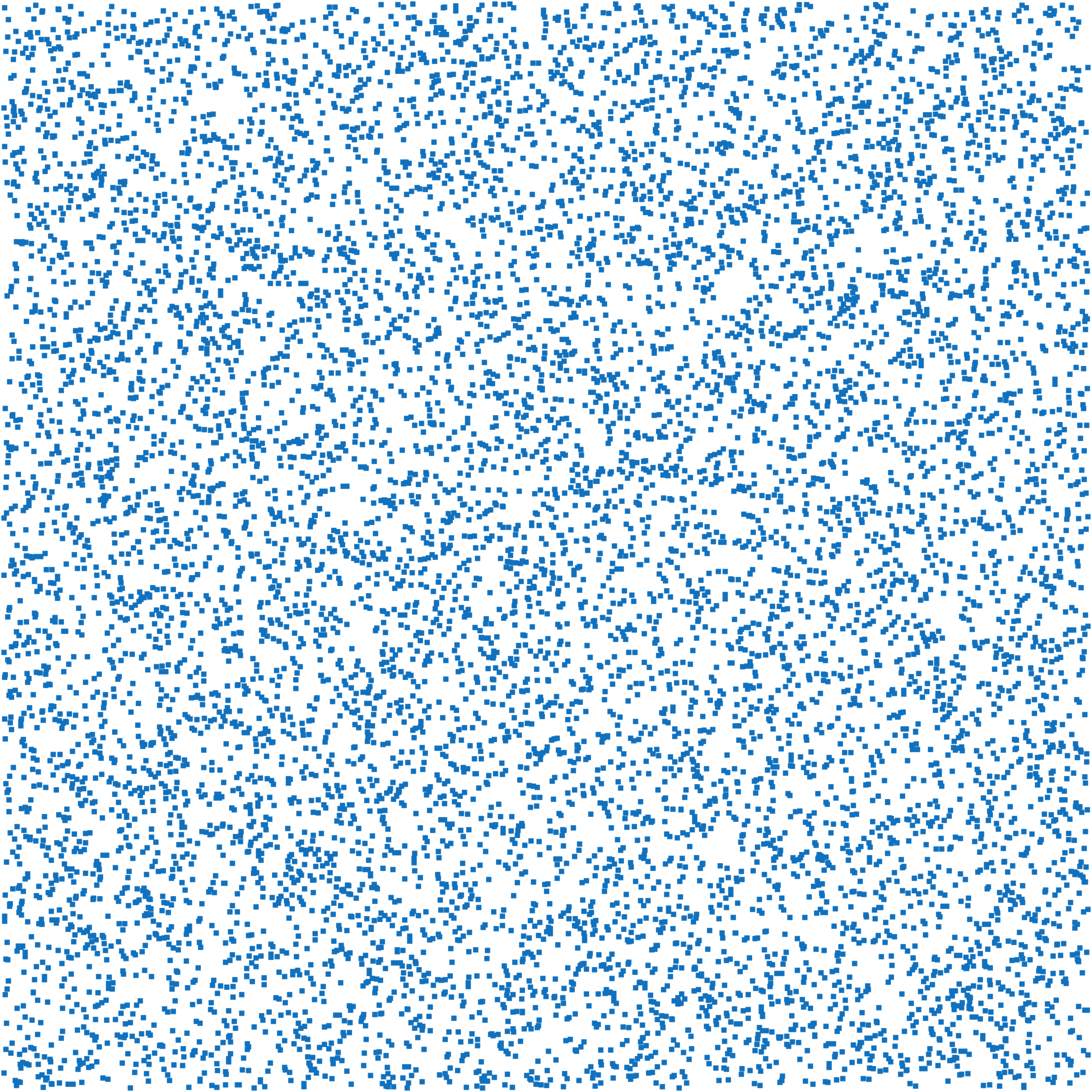} \quad
    \includegraphics[height=5cm]{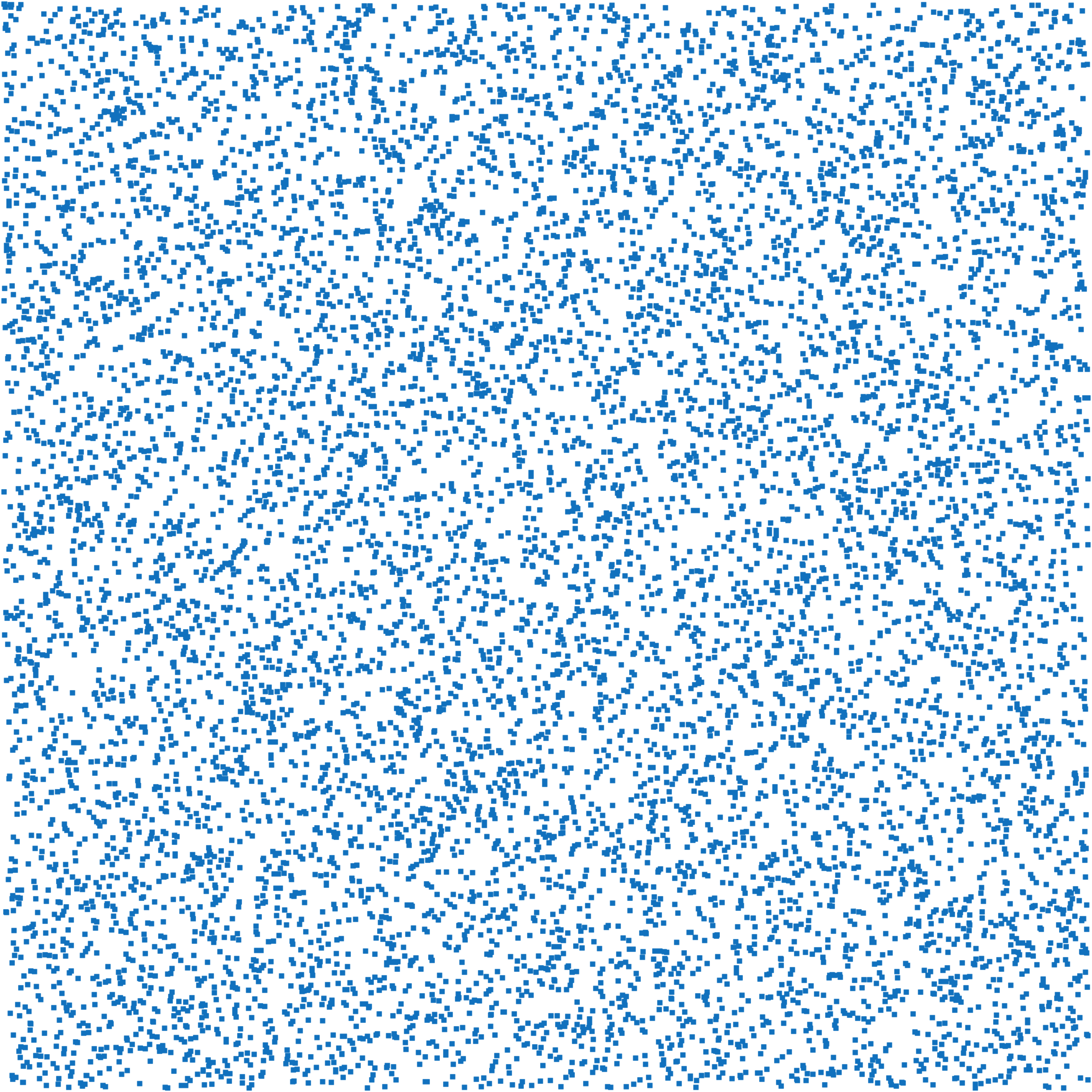}
    \caption{Diagram plots for GEPP induced permutations on $\GOE(n)$ (left), $\GUE(n)$ (middle), and $\Bern(1/2)^{n \times n}$ (right) for $n = 10^4$.}
    \label{fig:goe bern permuton}
\end{figure}

\begin{conjecture}
\label{conj:goe-permuton}
Let $A_n$ be a Wigner matrix or an iid matrix ensemble whose entry distribution is fixed independently of $n$ and is nondegenerate, with the pivot comparison and tie-breaking conventions described above. Let $\pi_n=\pi(A_n)$ be the permutation induced by GEPP. Then \(\mu_{\pi_n}\) converges weakly, in probability, to Lebesgue measure on \([0,1]^2\) as \(n\to\infty\).
\end{conjecture}

For the ensembles considered above, \(\pi_n\) is not uniform on \(S_n\) at finite \(n\geq 2\), yet the conjecture predicts the same global permuton limit as uniform permutations. Thus, distinct finite-\(n\) permutation laws may exhibit the same asymptotic global geometry.


\section{Butterfly permutations}
\label{sec:factorization}

Random \textit{butterfly permutations} are of the form $\pi(B)$ where $B$ is a random butterfly matrix. This section contains our main structural results: the exact GEPP factorization of general random butterfly matrices (\Cref{sec: simple butterfly,sec: ns butterfly}), the identification of the induced permutation group with a $2$-Sylow subgroup consisting of separable permutations, and a remark on why the corresponding growth-factor question, fully resolved in the simple case in \cite{PT23}, remains open here (\Cref{sec:growth}). We begin with a review of random butterfly matrices.

\subsection{Random butterfly matrices}
\label{sec:butterfly}

Random butterfly matrices were introduced by Parker to remove the \textit{need} of pivoting altogether when using GEPP \cite{Pa95}. Pivoting with GE can become a bottleneck that significantly slows down computation and often obstructs the use of parallelization and other acceleration strategies (cf. \cite{baboulin,Pa95}). Parker showed independent butterfly matrices $U,V$ of order $N = 2^n$ can be used to transform the nonsingular linear system $A \V x = \V b$ of order $N$ into the equivalent system $UAV^*\V y = U\V b$ and $\V y = V \V x$, which can  then be solved (with high probability in floating-point arithmetic, and almost surely in exact arithmetic) using GE with no pivoting (GENP); the recursive structure of (diagonal) butterfly matrices yields fast matrix-vector multiplication of order $O(N \log N)$ FLOPs, so that this preprocessing $UAV^*$ can be carried out without impacting the $O(N^3)$ leading order complexity of GE. An order $N=2^n$ butterfly matrix, $B$, which was studied extensively in \cite{P24_gecp,P24,PT23,Tr19}, is constructed recursively as $B = 1$ for $n=0$ and
\begin{equation}\label{def: butterfly}
    B = \begin{bmatrix}
        C & S \\ -S & C
    \end{bmatrix} \begin{bmatrix} A_1 & \V 0 \\ \V 0 & A_2 \end{bmatrix} = \begin{bmatrix}
        C A_1 & S A_2 \\ -S A_1 & C A_2
    \end{bmatrix},
\end{equation}
where $A_1,A_2$ are order $N/2$ butterfly matrices and $C,S$ are order $N/2$ real symmetric matrices that satisfy $CS = SC$ and the Pythagorean matrix equation $C^2 + S^2 = \V I$.  We will call butterfly matrices \textit{simple} when $A_1 = A_2$ holds at each recursive step. When $C,S$ are diagonal matrices, then their corresponding entries necessarily come in $(\cos\theta,\sin\theta)$ pairs for some $\theta$. By construction, then $B \in \SO(N)$. We will focus especially on when $C,S$  are also scalar matrices, so that $(C,S) = (\cos\theta,\sin\theta) \V I$ at each recursive step, and denote $\B(N)$ for the scalar butterfly matrices and $\B_s(N)$ for the simple scalar butterfly matrices. Under this lens, \eqref{def: butterfly} for scalar butterfly matrices $B \in \B(N)$ can be rewritten as
\begin{equation*}
    B = (R_\theta \otimes \V I_{N/2})(A_1 \oplus A_2), \qquad \mbox{where} \qquad R_ \theta = \begin{bmatrix}
\cos\theta & \sin\theta \\ -\sin\theta & \cos\theta
\end{bmatrix},
\end{equation*}
is the (clockwise) rotation matrix of angle $\theta$, using $\otimes$ for the Kronecker product of matrices and $\oplus$ for the direct sum. When $B \in \B_s(N)$ then $B$ further takes the form
\begin{equation*}
B = \bigotimes^{n}_{j=1} R_{\theta_j}.
\end{equation*}
In particular, we have for any $j = 0,\ldots,n$, then
\begin{align*}
    \B_s(N) = \bigotimes_{k=1}^n \SO(2) = \B_s(2^{n-j})\otimes \B_s(2^{j}), \quad \B(N) = (\B(2^{n-j}) \otimes \V I_{2^j}) \bigoplus_{k=1}^{2^{n-j}} \B(2^j).
\end{align*}
Let $\B^{(D)}_s(N)$ and $\B^{(D)}(N)$ denote the simple and general diagonal butterfly matrices, whose left-factor from \eqref{def: butterfly} has $(C,S) = \bigoplus_{j=1}^{N/2}(\cos\theta_j,\sin\theta_j)$ for each recursive step. Note then
\begin{equation}\label{def: diag factor}
    \begin{bmatrix}
        C & S\\ -S & C
    \end{bmatrix} = Q_n \left(\bigoplus_{j=1}^{N/2} R_{\theta_j} \right)Q_n^\top
\end{equation}
where $Q_n$ is the perfect shuffle matrix such that $Q_n(A \otimes B)Q_n^\top = B \otimes A$ for any $A \in \mathbb C^{N/2 \times N/2},B \in \mathbb C^{2 \times 2}$. For example, $Q_1 = \V I_2$, whereas $Q_2 = P_{(2 \ 3)} \in \mathcal P_4$, the set of permutation matrices of order $4$. It follows then the scalar butterfly matrices are formed using $n$ input angles in the simple case and $N-1$ input angles in the general butterfly case, while diagonal butterfly matrices are formed using $N-1$ and $nN/2$ respective input angles for the simple and general cases.

Random butterfly matrices $\B(N,\Sigma)$ are formed using the above recursion where each $(C,S)$ pair corresponds to a random matrix sampled from $\Sigma$. Let
\begin{align*}
    \Sigma_S &= \{(\cos\theta,\sin\theta)\V I_{2^k}: \theta \sim \Unif([0,2\pi)) \ \mbox{iid}\}\\
    \Sigma_D &= \{\bigoplus_{j=1}^{2^{k-1}}(\cos\theta_j,\sin\theta_j): \theta_j \sim \Unif([0,2\pi)) \ \mbox{iid}\}
\end{align*}
denote, respectively, when $C,S$ are formed using independent uniform angle scalar and diagonal matrices. In particular, then
\begin{equation*}
    \B_s(N,\Sigma_S) \sim \Haar\left(\bigotimes_{j=1}^n \SO(2)\right).
\end{equation*}

General $N = m^n$ order butterfly matrices can similarly be formed using the block structure
\begin{equation*}
    B = (A \otimes \V I_{N/m})\bigoplus_{j=1}^m A_j
\end{equation*}
for $A \in \mathbb C^{m \times m}$ and $A_j \in \mathbb C^{N/m \times N/m}$ for each $j$. Moreover, if $A,A_j$ are permutation matrices themselves, then so is $B$. See \cite{phd} for an overview of numerical and statistical properties of general butterfly matrices.

\subsection{Simple  butterfly permutations}\label{sec: simple butterfly}

When $B \sim \B_s(N,\Sigma_S)$, then $\pi(B)$ is a random simple (scalar) butterfly permutation. We will briefly outline then how one recovers the permutation factors for rotation matrices, and hence for simple scalar butterfly matrices. First recall the GENP factorization of a $2\times 2$ rotation matrix $R_\theta = L_\theta U_\theta$ where
\begin{equation*}
R_\theta = \begin{bmatrix}
\cos\theta & \sin \theta\\
-\sin\theta & \cos\theta
\end{bmatrix}, \qquad
L_\theta = \begin{bmatrix}
1 & 0\\
-\tan\theta & 1
\end{bmatrix}, \qquad
U_\theta = \begin{bmatrix}
\cos\theta & \sin \theta\\
0 & \sec\theta
\end{bmatrix}
\end{equation*}
for $\cos\theta\ne0$. If $\cos \theta = 0$, then $R_\theta$ does not have a GENP factorization. 

Furthermore, for $D = (-1) \oplus 1$ then $DR_\theta = R_{-\theta}D$, while $P_{(1 \ 2)} = R_{\pi/2}D$ so that $P_{(1 \ 2)} R_\theta = R_{\pi/2}DR_\theta = R_{\pi/2 - \theta}D$. Now note a GEPP pivot movement is needed on $R_\theta$ only when $|\cos\theta| < |\sin \theta|$ or $|\tan \theta| > 1$. (Throughout, we use the convention $|\tan\theta|=\infty$ when $\cos\theta=0$ and similarly $|\cot\theta| = \infty$ when $\sin \theta = 0$.) Define the pivot movement indicator
\begin{equation*}
e(\theta) =  \mathds 1(|\tan\theta| > 1) = \left\{\begin{array}{ll} 1 & \mbox{if $|\tan \theta| > 1$,} \\ 0 & \mbox{if $|\tan \theta| \le 1$,}
\end{array}\right.
\end{equation*}
and let $P_\theta = P_{(1 \ 2)}^{e(\theta)} = P_{e(\theta)\frac\pi2}$ and $D_\theta = D^{e(\theta)}$. It then follows
\begin{equation}\label{eq: 2x2 gepp}
P_\theta R_\theta = R_{\hat \theta}D_\theta = L_{\hat \theta} (U_{\hat \theta} D_\theta)
\end{equation}
is the GEPP factorization of $R_\theta$, with then $\hat \theta = \theta$ if $e(\theta) = 0$ (i.e., when no pivot movement is needed on $R_\theta$) and $\hat \theta = \frac\pi2 - \theta$ when $e(\theta) = 1$. This can then be used to establish the GEPP factorization of scalar butterfly matrices. Utilizing  properties of the Kronecker product (viz., its closure under transposition and inverses along with the mixed-product property), then the simple scalar butterfly permutations have the GEPP factorization $PB = LU$ where each factor is a Kronecker product of the associated Kronecker factors, with in particular $P = \bigotimes_j P_{\theta_j}$ when $B = \bigotimes_j R_{\theta_j}$. We can then define the simple scalar butterfly permutations  by
\begin{equation*}
     B_{s,n} = \bigotimes_{j=1}^n  S_2 =  B_{s,{n-j}} \otimes  B_{s,j} \subset S_N,
\end{equation*}
where $B_{s,1} = S_2$. (We use the $\otimes$ Kronecker product of permutations to denote the corresponding permutation matrix is realized as a Kronecker product of permutation matrices.) Since $S_2 = \langle {(1 \ 2)}\rangle \cong C_2$, where $C_k$ denotes the cyclic group of order $k$ (using the convention $\langle x \rangle = \{x^j: j = 0,1,2,\ldots\}$ to denote the subgroup of a group $G$ generated by $x \in G$), then $B_{s,n} \cong C_2^n$ by the mixed-product property. The bottom-left image in \Cref{fig:sparisity plot} shows the fractal diagram for a simple binary butterfly permutation.

As seen in \cite{P24}, since $\theta_j$ are iid uniform angles for $B \sim \B_s(N,\Sigma_S)$, then the generated GEPP permutation is uniform on $B_{s,n}$:
\begin{theorem}[\cite{P24}]\label{t: simple b}
    Let $B \sim \B_s(N,\Sigma_S)$. Then $\pi(B) \sim \Unif(B_{s,n})$.
\end{theorem}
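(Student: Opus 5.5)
The proof combines the Kronecker-product GEPP factorization just described with the independence of the angles $\theta_j$. Write $B = \bigotimes_{j=1}^n R_{\theta_j}$ with $\theta_j \sim \Unif([0,2\pi))$ iid. By \eqref{eq: 2x2 gepp} and the mixed-product property,
\[
\Big(\bigotimes_j P_{\theta_j}\Big) B = \Big(\bigotimes_j L_{\hat\theta_j}\Big)\Big(\bigotimes_j U_{\hat\theta_j}D_{\theta_j}\Big).
\]
The first task is to confirm that this is \emph{the} GEPP factorization produced by the pivoting rule \eqref{eq:GEPP perm}, and not merely some $PLU$ factorization. Two facts are needed:
\begin{itemize}
\item $\bigotimes_j L_{\hat\theta_j}$ is unit lower triangular with all entries of modulus at most $1$, since $|\tan\hat\theta_j|\le 1$.
\item The pivoting choices made at each step coincide with the Kronecker structure.
\end{itemize}
Uniqueness settles the second point. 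The GEPP factorization of a nonsingular matrix is determined by the requirement that the multipliers satisfy $|L_{ij}|\le 1$, together with the tie-breaking rule that takes the minimal index. I would argue this inductively on $n$ using the block form $B = R_{\theta_1}\otimes B'$. If $e(\theta_1)=0$, the first block column of $B$ is $(\cos\theta_1 B', -\sin\theta_1 B')$ with $|\cos\theta_1|\ge|\sin\theta_1|$. Block elimination then yields the Schur complement $\sec\theta_1 B'$, to which the induction hypothesis applies. If $e(\theta_1)=1$, one first swaps the two blocks. Ties, i.e.\ $|\tan\theta_j|=1$, occur with probability zero, so they can be ignored almost surely.

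Taking this as given, the induced permutation is the random element $\bigotimes_j (1\ 2)^{e(\theta_j)} \in B_{s,n}$. Since $\theta_j$ is uniform on $[0,2\pi)$, we have $\P(|\tan\theta_j|>1)=1/2$. The $e(\theta_j)$ are therefore iid $\Bern(1/2)$, because they are functions of independent variables. The map
\[
(\epsilon_1,\ldots,\epsilon_n)\mapsto\bigotimes_j (1\ 2)^{\epsilon_j}
\]
is a group isomorphism $C_2^n\to B_{s,n}$, by the mixed-product property. It is in particular a bijection, so the uniform measure on $\{0,1\}^n$ pushes forward to $\Unif(B_{s,n})$.

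The main obstacle is the first step: justifying that the row interchanges GEPP actually performs at each of the $N-1$ elimination steps are exactly those encoded by the Kronecker permutation, not just that a valid factorization exists. If the block induction becomes messy, I would instead rely on uniqueness directly. Almost surely $B$ is nonsingular and no ties occur, so GEPP is well defined. Its output $(P,L,U)$ is then characterized by the leading principal minors of $PB$ and the multiplier bound. So it suffices to check that every multiplier produced by the Kronecker $L$ has modulus at most $1$, with strict inequality wherever a tie could otherwise matter.
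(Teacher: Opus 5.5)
Your proposal is correct and is essentially the paper's argument. The GEPP factorization of $B=\bigotimes_j R_{\theta_j}$ is the Kronecker product of the $2\times 2$ factorizations \eqref{eq: 2x2 gepp}, certified as \emph{the} GEPP factorization because almost surely every subdiagonal entry of $\bigotimes_j L_{\hat\theta_j}$ has modulus strictly less than $1$. Your fallback uniqueness route is the one to rely on, since the block-induction sketch ("swap the two blocks") glosses over the fact that GEPP interchanges individual rows rather than whole blocks. The distributional step then matches the paper: the $e(\theta_j)$ are iid $\Bern(1/2)$, and $(\epsilon_1,\ldots,\epsilon_n)\mapsto\bigotimes_j(1\ 2)^{\epsilon_j}$ is a group isomorphism $C_2^n\to B_{s,n}$.
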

The  Kronecker structure of $B_{s,n}$ can be further exploited to determine the explicit action of $\pi(B)$ for each $k \in [N]$. Let $\pi \in B_{s,n}$ and let $\pi_j \in   S_2$ such that $\pi = \bigotimes_{j=1}^n {\pi_{j}}$. We can write
\begin{equation*}
    \V e_k = \bigotimes_{j=1}^n \V e_{a_{j}+1},
\end{equation*}
where $a_{j} \in \{0,1\}$ are the coefficients of the binary expansion of $k-1 = (a_1a_2\cdots a_n)_2$, so
    \begin{equation*}
        k = 1 + \sum_{j=1}^{n} a_j2^{n-j}.
    \end{equation*}
For example, $\V e_5 = \V e_2 \otimes \V e_1 \otimes \V e_1$ where $5-1=4 = (100)_2$. By the mixed-product property,
    \begin{equation*}
        \V e_{\pi(k)} = P_\pi\V e_k = \bigotimes_{j = 1}^n P_{\pi_{j}} \V e_{a_{j}+1} = \bigotimes_{j = 1}^n \V e_{\pi_{j}(a_{j}+1)}
    \end{equation*}
so that
    \begin{equation}
    \label{eq: bperm rule}
        \pi(k) = 1 + \sum_{j=1}^{n}(\pi_{j}(a_j+1)-1)2^{n-j} = 1 + (b_1b_2\cdots b_n)_2
    \end{equation}
where $b_j = \pi_j(a_j+1)-1$. Hence, the action of $\pi$ can be fully realized as the action on the binary coefficients of the input. In particular, knowing the action of $\pi$ on a \textit{single} input $k$ completely determines all of the Kronecker factors, and hence $\pi$ itself. We therefore obtain the following.

\begin{proposition}
For any $i,j\in[N]$, there exists a unique $\pi \in B_{s,n}$ such that $\pi(i)=j$.
\end{proposition}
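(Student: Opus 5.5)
The plan is to read existence and uniqueness directly off the binary action rule \eqref{eq: bperm rule}. Write $i-1=(a_1a_2\cdots a_n)_2$ and $j-1=(b_1b_2\cdots b_n)_2$, with all $a_k,b_k\in\{0,1\}$. By \eqref{eq: bperm rule}, a permutation $\pi=\bigotimes_{k=1}^n\pi_k\in B_{s,n}$, with each $\pi_k\in S_2$, satisfies
\[
\pi(i)=1+\sum_{k=1}^n(\pi_k(a_k+1)-1)2^{n-k}.
\]
Uniqueness of binary expansions then shows that $\pi(i)=j$ holds if and only if $\pi_k(a_k+1)=b_k+1$ for every $k\in[n]$. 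The problem therefore reduces to $n$ independent conditions, one for each coordinate.

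For each coordinate $k$, the group $S_2$ acts simply transitively on $\{1,2\}$. So there is exactly one $\pi_k\in S_2$ with $\pi_k(a_k+1)=b_k+1$: take $\pi_k$ to be the identity if $a_k=b_k$, and $(1\ 2)$ otherwise. Setting $\pi=\bigotimes_k\pi_k$ then gives existence.

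Uniqueness requires one additional fact: the factorization $\pi=\bigotimes_k\pi_k$ is itself unique, meaning the map $S_2^n\to B_{s,n}$ is injective. I expect this to be the only point needing care. To check it, I will note that the Kronecker factors of a permutation matrix $P=\bigotimes_k P_{\pi_k}$ are recovered from its action on $\V e_1=\bigotimes_k\V e_1$. Alternatively, this is immediate from the isomorphism $B_{s,n}\cong C_2^n$ already noted in the paper.

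A cleaner phrasing of the whole argument is a counting argument. The map $\pi\mapsto\pi(i)$ from $B_{s,n}$ to $[N]$ is injective by the coordinatewise reduction above, and $|B_{s,n}|=2^n=N$. An injective map between finite sets of equal size is a bijection, so $B_{s,n}$ acts regularly on $[N]$, which is exactly the statement.
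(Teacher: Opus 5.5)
Your proof is correct and follows the paper's argument: the binary action rule \eqref{eq: bperm rule} reduces $\pi(i)=j$ to the coordinatewise conditions $\pi_k(a_k+1)=b_k+1$, each with exactly one solution in $S_2$, so a single value of $\pi$ determines every Kronecker factor and hence $\pi$ itself. One small correction: uniqueness of $\pi$ does not actually need injectivity of $S_2^n\to B_{s,n}$, because any two solutions have the same forced factor tuple and therefore the same Kronecker product. That injectivity is only used for the count $|B_{s,n}|=N$ in your alternative counting phrasing.
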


\subsection{Exact GEPP factorization and general butterfly permutations}\label{sec: ns butterfly}

The random (scalar) butterfly permutations are of the form $\pi(B)$ where $B \sim \B(N,\Sigma_S)$. We will now establish that these random permutations align necessarily with uniform permutations from the group of general butterfly permutations $B_{n} \subset S_{N}$, which is defined recursively as a semidirect product group
\begin{equation}\label{eq: Bn form}
 B_{n} = ( B_{n-1} \oplus  B_{n-1}) \rtimes_\varphi \langle (1 \ 2) \otimes 1_{N/2}\rangle
\end{equation}
where $\varphi(\pi_1 \oplus \pi_2) = \pi_2 \oplus \pi_1$ for $\pi_i \in B_{n-1}$, with $B_1 = \langle {(1 \ 2)}\rangle = S_2$ and $B_0 = \{1\}$. This can be rephrased as $ B_{n}$ is an iterated $n$-fold wreath product of $\langle (1 \ 2)\rangle \cong C_2$ (for $C_m$ denoting the cyclic group of order $m$), i.e., $$B_n \cong B_{n-1} \wr C_2 \cong C_2 \wr \cdots \wr C_2 = C_2^{\wr n}.$$ Since $|B_{{{n+1}}}| = 2|B_{n}|^2$, then $|B_{n}| = 2^{2^n - 1}$, and since $\ord_2 (2^n !) = 2^n - 1$ (using $d = \ord_r(s)$ for integers $r,s$ if $r^d \mid s$ and $r^{d+1} \nmid s$), then necessarily $B_{n}$ comprises a 2-Sylow subgroup of $S_{2^n}$ (which aligns with the $n$-fold wreath product presentation; see \cite{Kaloujnine_1948}), and also arises in the context of groups of $p$-adic automorphisms acting on rooted $p$-ary trees \cite{Abert_Virag_2005}. For example, we have for $n = 1$, then $B_1 = S_2 \cong C_2$, the cyclic group of order 2, and for $n = 2$, then $B_2 \cong D_8 \cong C_2 \wr C_2$, the dihedral group of order 8.

Moreover, we note that $B_n$ comprise \textit{separable} permutations. This follows directly from induction as the semi-direct product action from \eqref{eq: Bn form} has the trivial action correspond to the direct sum of the input lower order butterfly permutations and the nontrivial action the skew sum of the inputs. The top-left image in \Cref{fig:sparisity plot} shows the iid fractal structure of general butterfly permutations.

To determine $\pi(B)$ when $B \sim \B(N,\Sigma_S)$, we will determine the necessary form of the GEPP permutation matrix factor of $B \in \B(N)$. We first establish a more general result, which includes the general (and simple) butterfly matrices. This is an expansion of \cite[Proposition SM2.1]{PT23}, which provided a general GEPP form for Kronecker products of order $2^n$ involving a rotation matrix.

\begin{proposition}
    \label{p:ns factor}
Let $B = B(\theta,A_1,A_2) = (R_\theta \otimes \V I_m)(A_1 \oplus A_2) \in \mathbb R^{2m \times 2m}$, where nonsingular $A_1,A_2 \in \mathbb R^{m \times m}$ and $|\tan \theta| \ne 1$. Assume $A_1$ and $A_2$ have GEPP factorizations, given by $P_kA_k = L_kU_k$ for each $A_k$ where also $|(L_k)_{ij}| < 1$ for all $i > j$. Then $PB = LU$ is the GEPP factorization of $B$ where
\begin{align}
P &= (P_1 \oplus P_2)(P_\theta \otimes \V I_m) \label{eq: P ns butterfly}\\
L &= (P_1 \oplus P_2)(L_{\hat \theta} \otimes \V I_m)(P_1^\top \oplus P_2^\top)(L_1 \oplus L_2) \label{eq: L 1st}\\
&= \begin{bmatrix}
L_1 & \V 0 \\ -\tan \hat \theta P_2 P_1^\top L_1 & L_2
\end{bmatrix} \label{eq: L 2nd}\\
U &= (L_1^{-1} \oplus L_2^{-1})(P_1\oplus P_2)(U_{\hat \theta}D_\theta \otimes \V I_m)(A_1 \oplus A_2) \label{eq: U 1st} \\
&= \begin{bmatrix}
(-1)^{e(\theta)}\cos \hat \theta U_1 & \sin \hat \theta L_1^{-1}P_1 A_2 \\ \V 0 & \sec \hat \theta U_2
\end{bmatrix}. \label{eq: U 2nd}
\end{align}
\end{proposition}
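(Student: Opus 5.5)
The proof has two parts. First I check that the displayed factors give a valid LU factorization, $PB = LU$ with $L$ unit lower triangular and $U$ upper triangular. Then I check that the permutation $P$ is exactly the one GEPP selects, step by step. For the algebraic part I start from the $2\times 2$ identity \eqref{eq: 2x2 gepp}, $P_\theta R_\theta = L_{\hat\theta}U_{\hat\theta}D_\theta$. Tensoring with $\V I_m$ and using the mixed-product property gives
\[
(P_\theta\otimes \V I_m)B = (L_{\hat\theta}\otimes \V I_m)(U_{\hat\theta}D_\theta\otimes \V I_m)(A_1\oplus A_2).
\]
Next I left-multiply by $P_1\oplus P_2$ and insert $(P_1^\top\oplus P_2^\top)(L_1\oplus L_2)(L_1^{-1}\oplus L_2^{-1})(P_1\oplus P_2)=\V I$ between the two factors. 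This yields \eqref{eq: P ns butterfly}, \eqref{eq: L 1st} and \eqref{eq: U 1st} directly.

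The block forms come from short computations.
\begin{itemize}
\item $L_{\hat\theta}\otimes \V I_m$ has blocks $\V I$ on the diagonal and $-\tan\hat\theta\,\V I$ below. Conjugating by $P_1\oplus P_2$ turns the lower block into $-\tan\hat\theta P_2P_1^\top$, and right-multiplying by $L_1\oplus L_2$ gives \eqref{eq: L 2nd}.
\item $U_{\hat\theta}D_\theta$ equals $\begin{bmatrix}(-1)^{e(\theta)}\cos\hat\theta & \sin\hat\theta\\ 0&\sec\hat\theta\end{bmatrix}$.
\item Multiplying out with $A_1\oplus A_2$ and using $L_k^{-1}P_kA_k=U_k$ gives \eqref{eq: U 2nd}.
\end{itemize}
So $L$ is unit lower triangular and $U$ is upper triangular, and $PB=LU$.

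It remains to show that this LU factorization is the GEPP one. Since the GEPP factorization of a nonsingular matrix is characterized by $L$ being unit lower triangular with $|L_{ij}|\le 1$ and by the pivot choices (with the min-index tie-break), I argue column by column.

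\textbf{First $m$ columns.} After the row permutation $P_\theta\otimes\V I_m$, the first block column is $\begin{bmatrix}\cos\hat\theta A_1\\ -\sin\hat\theta A_1\end{bmatrix}$ up to sign. Since $|\tan\hat\theta|<1$ strictly (this uses $|\tan\theta|\ne1$), each bottom-half candidate is strictly dominated by the corresponding top-half entry. During the first $m$ elimination steps, the bottom half stays equal to $-\tan\hat\theta$ times the corresponding (row-permuted) top-half rows. So GEPP runs on the top half exactly as it does on $\cos\hat\theta A_1$, producing $P_1$ and $L_1$. The bottom rows never win a pivot, and they are eliminated with multiplier entries $-\tan\hat\theta$ times entries of $P_2P_1^\top L_1$, all of magnitude less than $1$. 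This matches the lower-left block of \eqref{eq: L 2nd}.

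\textbf{Remaining $m$ columns.} After these steps the trailing Schur complement is the $(2,2)$ block $\sec\hat\theta$ times $P_2A_2$, up to a row permutation of the bottom half. Equivalently it is a row permutation of $\sec\hat\theta A_2$. GEPP on it then gives $P_2, L_2$, because scaling by a nonzero constant does not change the pivot choices. This accounts for the $P_1\oplus P_2$ factor.

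I expect the main obstacle to be the strictness and tie-breaking bookkeeping. To rule out a bottom row winning a tie with a top row, I need the bottom entries at each step to be exactly $\tan\hat\theta$ times top entries, with $|\tan\hat\theta|<1$. I also need the hypothesis $|(L_k)_{ij}|<1$: it ensures that the bottom-half rows stay a fixed scalar multiple of top rows along the entire elimination, and do not merely dominate at the first step. If this invariant is hard to state cleanly, I would instead verify the Schur complement identity directly from the block $LU$ factors: the $k$th intermediate matrix $A^{(k)}$ is determined by the leading $k$ columns of $L$ and rows of $U$, and I would compare its leading-column magnitudes row by row.
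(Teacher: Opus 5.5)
Your proof is correct, but its second half takes a different route from the paper's.

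\textbf{Algebraic half.} This matches the paper: tensor \eqref{eq: 2x2 gepp} with $\V I_m$, apply the mixed-product property, and insert $(P_1^\top\oplus P_2^\top)(L_1\oplus L_2)(L_1^{-1}\oplus L_2^{-1})(P_1\oplus P_2)$. Your block computations are correct.

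\textbf{Identification with GEPP.} The paper does not simulate the elimination. It cites \cite[Theorem SM1.4]{PT23}: if a nonsingular real matrix has a factorization $PA=LU$ with $L$ unit lower triangular and $|L_{ij}|<1$ for all $i>j$, that factorization is the GEPP one, since every pivot is then a strict maximum and tie-breaking never matters. The paper then only bounds the entries of $L$. The diagonal blocks are covered by hypothesis, and the lower-left block by $|\tan\hat\theta|\,|(L_1)_{ij}|\le|\tan\hat\theta|<1$. This characterization needs the strict inequality; your opening version with $|L_{ij}|\le 1$ does not characterize GEPP.

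\textbf{What your route buys, and where it is loose.} Your column-by-column simulation gives more structure. The first $m$ pivots depend only on $A_1$ and $\theta$, the bottom rows never win, and the trailing Schur complement is (a row permutation of) $\sec\hat\theta A_2$. This is the picture the paper uses informally in \Cref{sec:growth}. The cost is that you must track row positions, and there you are imprecise.
\begin{itemize}
\item When $e(\theta)=1$, GEPP on $B$ itself takes its first $m$ pivots from the bottom half, and its transpositions move $A_2$-rows into the bottom half. You silently analyze $(P_\theta\otimes\V I_m)B$ instead of $B$.
\item GEPP on $Q\sec\hat\theta A_2$, for a row permutation $Q$, does not return $P_2$. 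It returns $P_2Q^\top$, together with $L_2$ and $\sec\hat\theta U_2$.
\end{itemize}
Both steps rely on an unstated lemma: if every pivot is a strict maximum, GEPP selects rows by value rather than position, so $QA$ has GEPP factorization $(P_AQ^\top)(QA)=L_AU_A$.

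\textbf{Role of the hypothesis.} That lemma is where $|(L_k)_{ij}|<1$ is needed. It is not what keeps the bottom rows proportional to the top ones; that follows automatically from linearity of the row operations. With ties in $A_1$ and $e(\theta)=1$, the two runs hold the candidate rows in different orders. The min-index rule can then pick a different row in $B$ than in $A_1$, and the formula for $P$ fails.

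State this equivariance lemma explicitly, and your argument is complete. Alternatively, use your closing fallback: reading each intermediate matrix off the leading columns of $L$ and rows of $U$ is exactly the proof of the characterization the paper cites.
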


\begin{figure}[t]
    \centering
    \includegraphics[height=5cm]{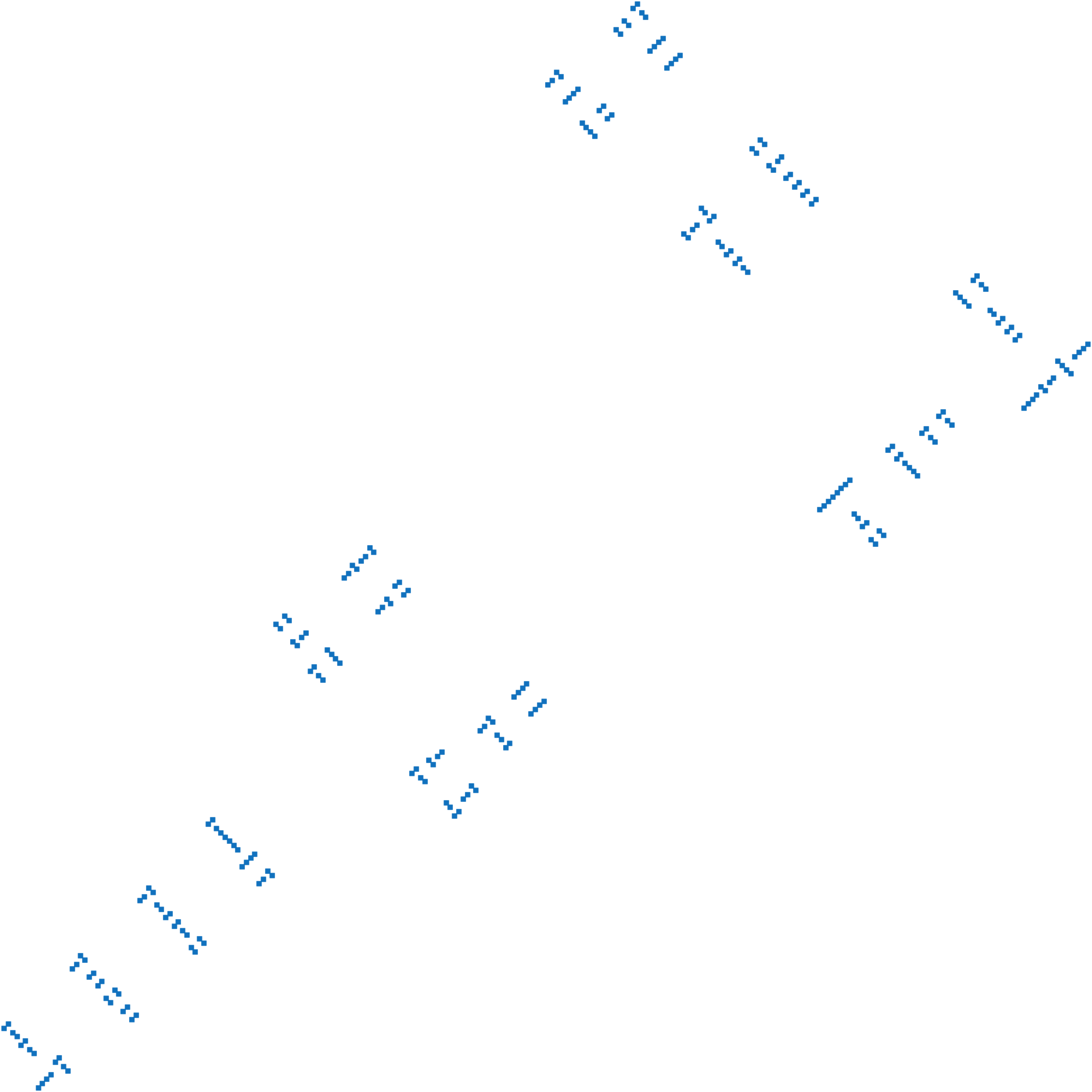} \qquad
    \includegraphics[height=5cm]{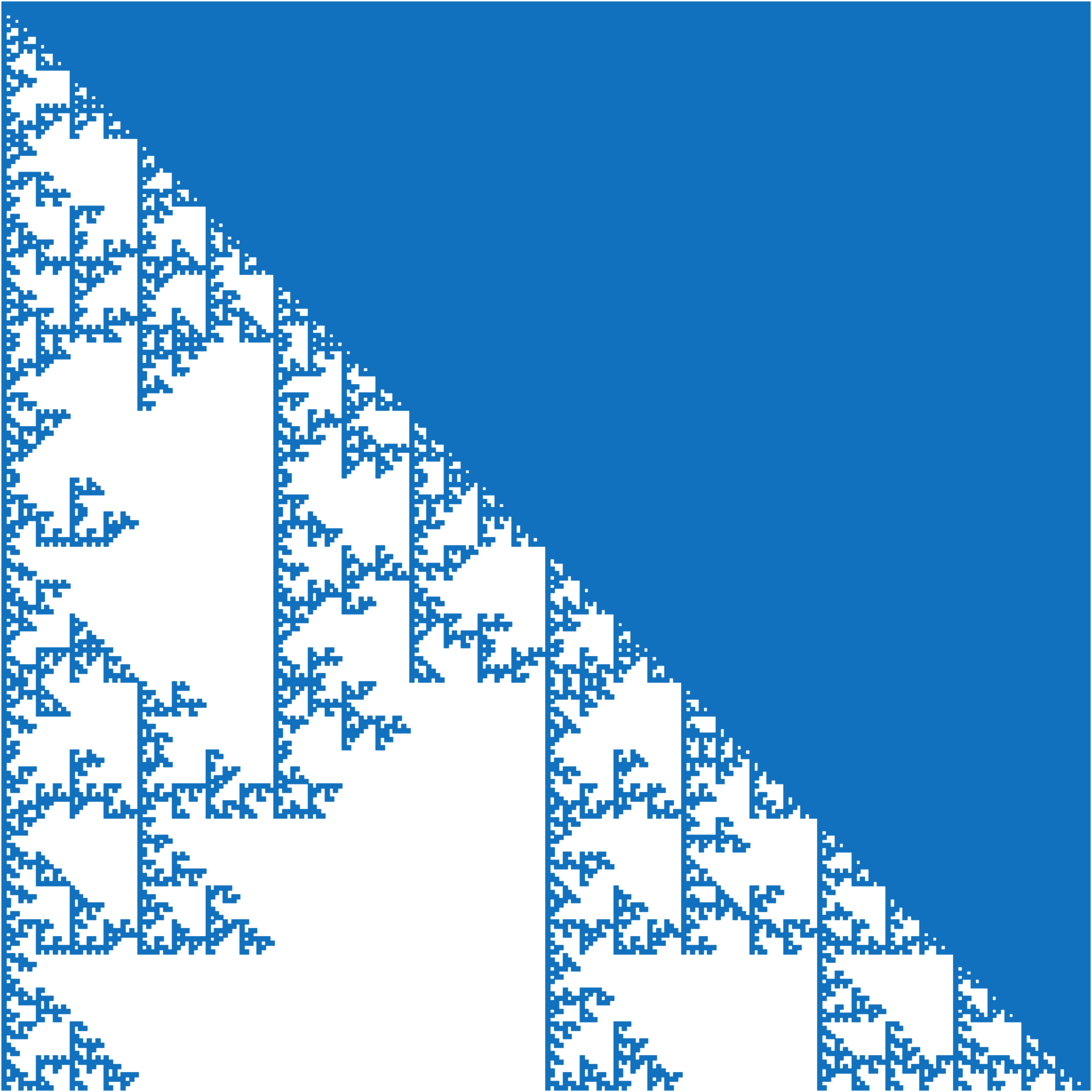} \vspace{1pc} \\
    \includegraphics[height=5cm]{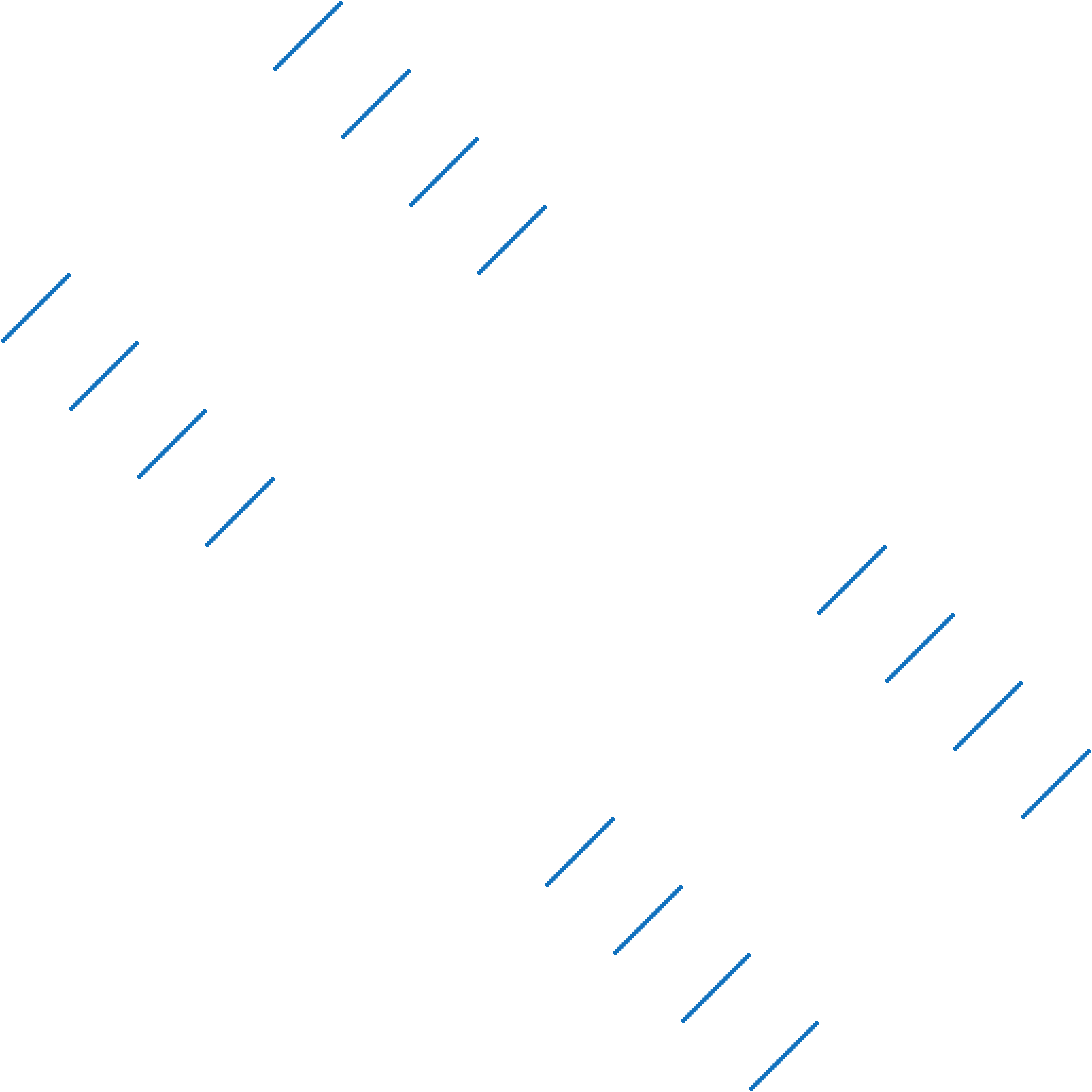}
    \qquad
    \includegraphics[height=5cm]{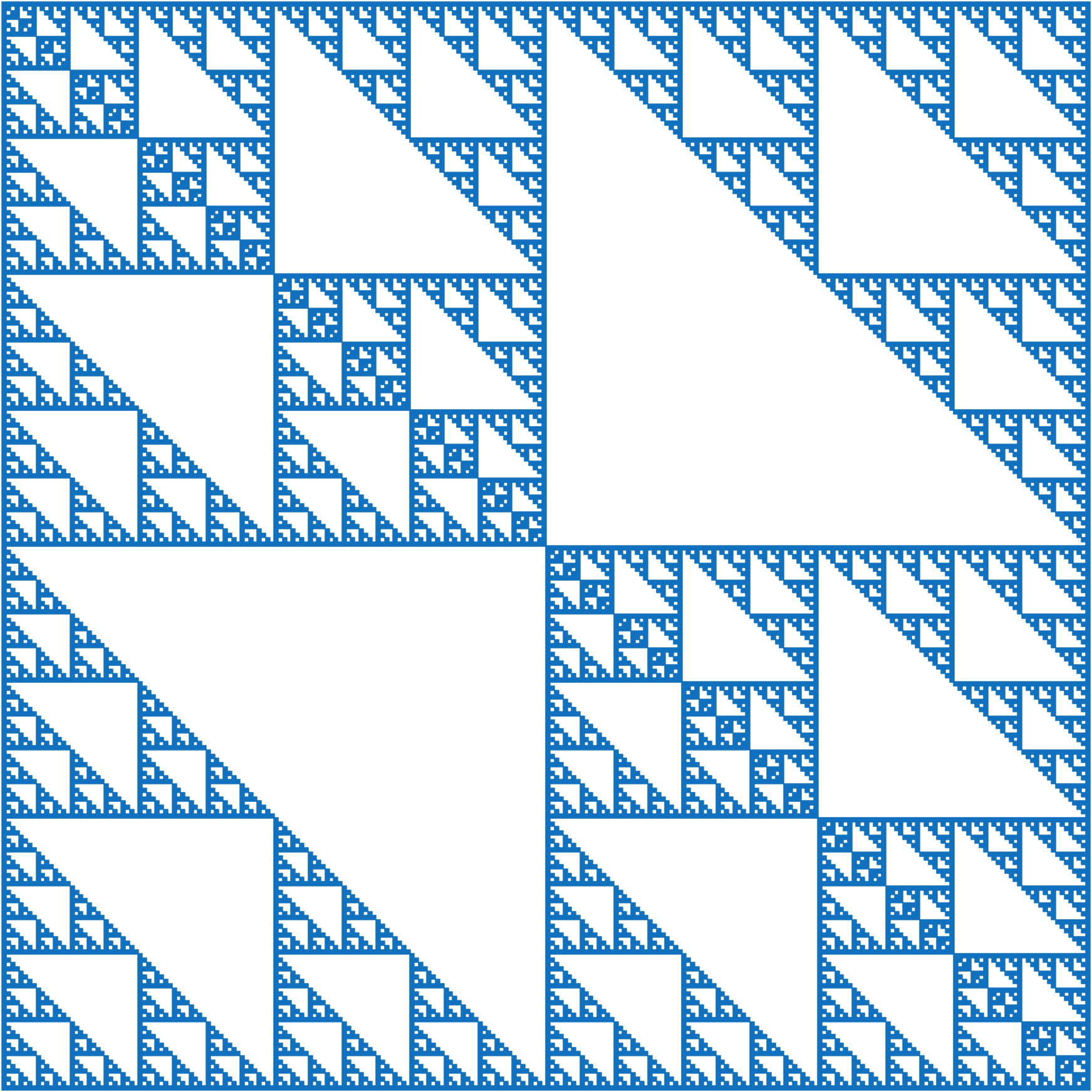}
    \caption{Sparsity plots of the GEPP induced permutation matrix $P$ and the matrix $L+U$ for general butterfly matrices (top) and simple butterfly matrices (bottom).}
    \label{fig:sparisity plot}
    \vspace{-1pc}
\end{figure}

\begin{proof}
The condition $|\tan \theta| \ne 1$ then eliminates the cases when there are pivot search ties  when using GEPP on $R_\theta$, and hence eliminates such ties between separate blocks on $B$. A straightforward computation verifies $PB = LU$ when using the forms given on \eqref{eq: L 1st} and \eqref{eq: U 1st}:
\begin{align*}
LU &= (P_1 \oplus P_2)(L_{\hat \theta} \otimes \V I_m)(U_{\hat \theta}D_\theta \otimes \V I_m)(A_1 \oplus A_2)\\
&= (P_1 \oplus P_2)(R_{\hat \theta}D_\theta \otimes \V I_m)(A_1 \oplus A_2)\\
&= (P_1 \oplus P_2)(P_\theta R_\theta \otimes  \V I_m)(A_1 \oplus A_2)\\
&=(P_1 \oplus P_2)(P_\theta \otimes \V I_m)(R_\theta \otimes \V I_m)(A_1 \oplus A_2)\\
&= PB,
\end{align*}
using the mixed-product property and \eqref{eq: 2x2 gepp}. Additional  straightforward checks  verify equality between forms \eqref{eq: L 1st} and \eqref{eq: L 2nd} as well as \eqref{eq: U 1st} and \eqref{eq: U 2nd}. To establish this is necessarily the (unique) GEPP factorization of $B$, we note this follows directly from  the fact $|L_{ij}| < 1$ for all $i > j$ (cf. \cite[Theorem SM1.4]{PT23} that characterizes this sufficient condition for real-valued matrices), which necessarily holds since $|(L_{k})_{ij}| < 1$ for $i > j$ for each $k$ and $|\tan \hat \theta| = \min(|\tan\theta|,|\cot\theta|) < 1$.
\end{proof}

See \Cref{fig:sparisity plot} for an image of the sparsity plots for $P$ and $L+U$ for the GEPP induced factors $PB = LU$ for general and simple butterfly matrices. (Note the sparsity plot of $P$ aligns with the diagram $(j,\pi(j))/2^n$ reflected across $y = 1/2$.) We note in particular the fractal structure of the corresponding butterfly permutations, with the simple case determined by direct or skew sums of identical block copies of lower order permutations. The deterministic sparsity pattern for the $L+U$ factor in the simple case, which manifests a reflected Sierpi\'nski triangle, aligns with the Kronecker product forms of each factor along with the initial sparsity patterns for $L_\theta$ and $U_\theta$. The random sparse $L$ factor and dense $U$ factor for general butterfly matrices follow from \Cref{p:ns factor} and induction, with the density following since also $L_1^{-1}P_1 A_2$ in the U factor in \eqref{eq: U 2nd} retains full density almost surely (when $A_1 \ne A_2$). Additionally, we note that each butterfly permutation has a random fractal structure that leads to a distinct limiting geometry outside the uniform universality class; we postpone a full classification of these limiting objects to future work. These rigidly self-similar fractal diagrams contrast also with those of uniform separable permutations (\Cref{fig:unif permuton}), which exhibit less rigidity in the absence of the additional group structure of butterfly permutations.

Recall for $B \sim \B(N,\Sigma_S)$, then $B  = (R_\theta \otimes \V I_{N/2})(A_1 \oplus A_2) = B(\boldsymbol \theta)$ for $\boldsymbol \theta = (\theta,\boldsymbol \theta^{(1)},\boldsymbol \theta^{(2)})$ where $A_i = B(\boldsymbol \theta^{(i)}) \sim \B(N/2,\Sigma_S)$ and $\theta_j^{(i)}\sim \Unif([0,2\pi))$ iid. Since $\P(|\tan \theta| = 1) = 0$, then $\pi(B) \in B_n$ almost surely for $B \sim \B(N,\Sigma_S)$ using GEPP by \Cref{p:ns factor}, where we further highlight the condition $|L_{ij}| < 1$ holds almost surely for $L$ the GEPP lower triangular factor. In particular, then $\operatorname{supp}\mathcal L(\pi(B)) = B_n$. Unlike simple scalar butterfly matrices, general butterfly scalar matrices are not closed under multiplication, and in particular, do not comprise a subgroup of $\SO(N)$. Nevertheless, these both induce subgroups of  $S_N$ through GEPP.

Using \Cref{p:ns factor} and straightforward induction, it follows that no GEPP pivot movements are needed if $|\tan\theta_k|<1$ for all $k$ (noting the explicitly removed pivot tie-condition if $|\tan \theta_k| = 1$). This further lines up with a similar result for simple butterfly matrices applied to the simple butterfly permutations, since $\pi_k=1$ (i.e., no pivot movement is needed) only if $|\tan\theta_k| <  1$ for $\bigotimes_{j=1}^n\pi_j\in  B_{s,n}$.

\begin{corollary}\label{cor: no pivot}
Let $B$ be a scalar butterfly matrix, with recursion angles $\theta_k$. If $|\tan\theta_k|<1$ for all $k$, then the GENP and GEPP factorizations of $B$ align, i.e., the GEPP factor has $P=\V I$.
\end{corollary}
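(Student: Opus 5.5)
We argue by induction on $n$, where $N = 2^n$, with \Cref{p:ns factor} supplying the inductive step. The inductive claim is strengthened to carry the hypothesis that proposition needs. We show that if every recursion angle of $B \in \B(2^n)$ satisfies $|\tan\theta_k| < 1$, then $B$ has GEPP factorization $PB = LU$ with $P = \V I$ and with strictly subdiagonal entries $|L_{ij}| < 1$. Since GEPP then performs no row swaps, its factorization coincides with GENP, so the $P = \V I$ conclusion gives the corollary.

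For the base case $n = 0$, $B = 1$ and the claim is trivial. For $n = 1$, $B = R_\theta$ with $|\tan\theta| < 1$. Then $e(\theta) = 0$, so by \eqref{eq: 2x2 gepp} we get $P_\theta = \V I$, $\hat\theta = \theta$ and $D_\theta = \V I$. The factor $L = L_\theta$ has sole subdiagonal entry $-\tan\theta$, of modulus $< 1$. Nonsingularity is automatic, since $B \in \SO(N)$.

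For the inductive step, write $B = (R_\theta \otimes \V I_{N/2})(A_1 \oplus A_2)$ with $A_1, A_2 \in \B(N/2)$. Every recursion angle of $A_1$ and $A_2$ is among the angles of $B$, so each has modulus of tangent $< 1$. By the inductive hypothesis, $A_k$ has GEPP factorization $P_k A_k = L_k U_k$ with $P_k = \V I$ and $|(L_k)_{ij}| < 1$. The top angle satisfies $|\tan\theta| < 1$, in particular $|\tan\theta| \ne 1$, so \Cref{p:ns factor} applies. From \eqref{eq: P ns butterfly} we obtain
\[
P = (\V I \oplus \V I)(P_\theta \otimes \V I_{N/2}) = \V I,
\]
because $e(\theta) = 0$. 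By \eqref{eq: L 2nd}, $L$ has diagonal blocks $L_1, L_2$ and lower-left block $-\tan\theta\, L_1$, using $\hat\theta = \theta$ and $P_1 = P_2 = \V I$.

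The main point to check is that the bound $|L_{ij}| < 1$ propagates. This is needed so the next level can reapply \Cref{p:ns factor}, and it is where the argument could otherwise fail. The diagonal blocks inherit the bound by hypothesis. In the off-diagonal block $-\tan\theta\, L_1$, every entry of $L_1$ has modulus $\le 1$: the diagonal entries of $L_1$ equal $1$ and the subdiagonal entries are $< 1$. Multiplying by $|\tan\theta| < 1$ therefore gives modulus $< 1$. The same fact is recorded at the end of the proof of \Cref{p:ns factor}.

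This closes the induction. The resulting factorization has $P = \V I$ and, by the uniqueness noted in \Cref{p:ns factor}, it is the GEPP factorization. It is therefore also the GENP factorization of $B$.
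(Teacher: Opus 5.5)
Your proof is correct and follows the same route as the paper, which derives this corollary by a straightforward induction on $n$ using \Cref{p:ns factor}. Your strengthened inductive hypothesis, which carries the bound $|L_{ij}|<1$ through the off-diagonal block $-\tan\theta\,L_1$, makes explicit the bound that the proof of \Cref{p:ns factor} already records.
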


\noindent The same conclusion holds with $|\tan\theta_k|\leq 1$ for all $k$ under the default GEPP tie-breaking convention. In particular, this applies almost surely when the angles are sampled independently and uniformly from $[0,2\pi)$, restricted to the event $|\tan\theta_k|\leq1$ for all $k$. A similar figure for \Cref{fig:sparisity plot} would yield $P=\V I$ and $L+U$ with full density on $U$ and $L$ with density matching the Sierpi\'nski triangle pattern from the simple butterfly case.

As seen in \cite{PT23}, the GEPP factorization for the simple case extends to also include alignment with the GE with rook pivoting (GERP) factorizations, which follows a pivot candidate search update path using successive column and then row searches until a candidate has maximal magnitude in both its row and column. This is not the case for general butterfly matrices, as can be seen by considering the first pivot search when the $A_2$ factor could have larger magnitude entries than $A_1$ that could result in a GERP column pivot movement. For example, for $B = (R_{\pi/4} \otimes \V I_2)(R_{\pi/4} \oplus R_{\pi/3}) \in \B(4)$, then
\begin{equation*}
    \|B_{:,1}\|_\infty = |B_{11}| = \frac12 = \sqrt{\frac28} < \|B_{1,:}\|_\infty = |B_{14}| = \sqrt{\frac38} = \max_{ij} |B_{ij}|.
\end{equation*}
Another future area of study can consider induced permutations using GE with other pivoting strategies, such as GERP or GE with complete pivoting (GECP).

We can now establish:

\begin{theorem}\label{thm: unif ns}
Let $B \sim \B(N,\Sigma_S)$. Then $\pi(B) \sim \Unif(B_n)$.
\end{theorem}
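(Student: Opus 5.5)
We argue by induction on $n$, using the recursive form \eqref{eq: P ns butterfly} of the GEPP permutation factor from \Cref{p:ns factor}. The base case $n=1$ is the $2\times2$ rotation case. Here $\pi(B) = (1\ 2)^{e(\theta)}$, and for $\theta\sim\Unif([0,2\pi))$ we have $\P(|\tan\theta|>1)=1/2$. Hence $\pi(B)\sim\Unif(S_2)=\Unif(B_1)$. (For $n=0$ the statement is trivial.)

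For the inductive step, write $B=(R_\theta\otimes \V I_{N/2})(A_1\oplus A_2)$ with $\theta,A_1,A_2$ independent and $A_i\sim\B(N/2,\Sigma_S)$. Almost surely $|\tan\theta|\ne1$, and by the remark following \Cref{p:ns factor} (applied inductively) the $L_i$ factors satisfy $|(L_i)_{jk}|<1$ almost surely. So \Cref{p:ns factor} gives, almost surely,
\[
P_{\pi(B)} = (P_{\pi(A_1)}\oplus P_{\pi(A_2)})(P_\theta\otimes \V I_{N/2}),
\]
that is, $\pi(B) = (\pi(A_1)\oplus\pi(A_2))\,\sigma^{e(\theta)}$ with $\sigma=(1\ 2)\otimes 1_{N/2}$. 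The three ingredients are independent, since they are measurable functions of the independent blocks $\theta,\boldsymbol\theta^{(1)},\boldsymbol\theta^{(2)}$. By the inductive hypothesis, $\pi(A_1),\pi(A_2)$ are iid $\Unif(B_{n-1})$. Hence $\pi(A_1)\oplus\pi(A_2)\sim\Unif(B_{n-1}\oplus B_{n-1})$, the uniform measure on the normal subgroup $H=B_{n-1}\oplus B_{n-1}$ of $B_n$. Meanwhile $\sigma^{e(\theta)}$ is uniform on $\{1,\sigma\}$, a complete set of representatives for the two cosets of $H$ in $B_n$ by \eqref{eq: Bn form}.

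We then conclude either from the Subgroup algorithm (\Cref{thm: subgroup alg}), with $z\sim\Unif(H)$ and $y=\sigma^{e(\theta)}$ uniform on $H\backslash B_n$, giving $zy\sim\Unif(B_n)$, or by direct counting. For the counting argument, every $g\in B_n$ factors uniquely as $h\sigma^\epsilon$, so
\[
\P(\pi(B)=g)=\P(\pi(A_1)\oplus\pi(A_2)=h)\,\P(e(\theta)=\epsilon)=|B_{n-1}|^{-2}\cdot\tfrac12=|B_n|^{-1}.
\]

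The only real obstacle is confirming that the hypotheses of \Cref{p:ns factor} hold almost surely at every recursive level. These hypotheses are nonsingularity (automatic, since $B\in\SO(N)$), no ties $|\tan\theta_k|=1$ (a null event), and the strict bound $|L_{ij}|<1$, which must propagate through the induction. This propagation follows from \eqref{eq: L 2nd}: the new off-diagonal block $-\tan\hat\theta\,P_2P_1^\top L_1$ has entries bounded by $|\tan\hat\theta|<1$ times entries of a row-permuted unit lower triangular $L_1$. The remaining care is in the order of composition of the permutation factors, so that the coset decomposition matches the semidirect product convention in \eqref{eq: Bn form}. Either order works for the uniformity conclusion, since $H$ is normal.
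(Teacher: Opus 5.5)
Your proof is correct and follows the paper's argument. The paper also inducts on $n$ via the factorization $\pi(B)=(\pi_1\oplus\pi_2)(\pi_\theta\otimes 1)$ from \Cref{p:ns factor}, with $\pi_1\oplus\pi_2$ uniform on $B_{n-1}\oplus B_{n-1}$ and $\pi_\theta\otimes 1$ an independent uniform right-coset representative, and concludes with the Subgroup algorithm. Your direct base-case computation, the counting alternative, and your explicit check that the bound $|L_{ij}|<1$ propagates through \eqref{eq: L 2nd} are minor elaborations of steps the paper handles by citation or in passing.
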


\begin{proof}
We will use induction on $n = \log_2 N$. For $n = 1$, the result holds from \Cref{t: simple b} since $\B(2,\Sigma_S) = \B_s(2,\Sigma_S) = \Haar(\SO(2))$ and $S_2 = B_1$. Assume the result holds for $n$. Let $B \sim \B(2^{n+1},\Sigma_S)$ and write $B = B(\theta,A_1,A_2) = (R_\theta \otimes \V I_{2^n})(A_1 \oplus A_2)$ for independent $\theta,A_1,A_2$ where $\theta \sim \Unif([0,2\pi))$ and $A_i \sim \B(2^n,\Sigma_S)$. By the recursive construction, all rotation angles are continuously distributed, so almost surely no angle is at a pole or tie value of the corresponding rotation matrix factorization. So all GEPP factorizations arising in the recursion are uniquely determined almost surely, independently of the tie strategy. Since $\P(|\tan \theta| \ne 1) = 1$, by \Cref{p:ns factor} we have
$$
    \pi(B) = (\pi_1 \oplus \pi_2)(\pi_\theta \otimes 1_{2^n}) \in B_{n+1}
$$
almost surely using GEPP with any tie strategy, where $\pi_i \in B_n$ and $\pi_\theta$ comprise the permutations from the corresponding GEPP permutation matrix factors, respectively, for $A_i$ and $R_\theta$. By the inductive hypothesis, $\pi_i \sim \Unif(B_n) = \Haar(B_n)$, with the corresponding GEPP factorizations well-defined almost surely, and so $\pi_1 \oplus \pi_2 \sim \Unif(B_n \oplus B_n)$. Moreover,
$$
    \pi_\theta\otimes 1
    \sim \Haar\bigl(\langle (1\ 2)\otimes 1_{2^n}\rangle\bigr)
    \sim \Unif\bigl((B_n\oplus B_n)\backslash B_{n+1}\bigr),
$$
that is, it is a uniform representative of the right cosets of the subgroup $B_n\oplus B_n$ in $B_{n+1}$ using \eqref{eq: Bn form}. This is independent of $\pi_1\oplus\pi_2$. It follows from the Subgroup algorithm (\Cref{thm: subgroup alg}) that
$$
    \pi(B)\sim\Haar(B_{n+1})=\Unif(B_{n+1}),
$$
as desired.
\end{proof}

We can use the explicit structure on ${\pi} \in  B_n$ from \eqref{eq: P ns butterfly} to determine a recursive formula to determine $\pi(k)$ for any $k \in [N]$. Let $\pi_i \in  B_{n-1}$ and $\pi_\theta \in B_1$ where ${\pi} = ({\pi_1} \oplus {\pi_2})(\pi_\theta \otimes 1)$. If $\pi_\theta = 1$, then $\pi = \pi_1 \oplus \pi_2$ while if $\pi_\theta = (1\ 2)$ then $\pi = \pi_2 \ominus \pi_1$. It follows
\begin{equation*}
    \pi(k) = \left\{
\begin{array}{ll}
\begin{array}{ll}
\pi_1(k) & \mbox{if $k \le N/2$}\\
\pi_2(k-N/2)+N/2 & \mbox{if $k > N/2$}\\
\end{array} & \mbox{if $\pi_\theta = 1$}\\ \vspace{-.5pc} \\
\begin{array}{ll}
\pi_2(k)+N/2 & \mbox{if $k \le N/2$}\\
\pi_1(k-N/2) \hspace{2.5pc} & \mbox{if $k > N/2$}
\end{array} & \mbox{if $\pi_\theta = {(1 \ 2)}$}
\end{array}    \right.
\end{equation*}
This characterization of \(\pi(k)\) explains the \((00,11)\) and \((01,10)\) structure visible in each occupied subsquare in \Cref{fig:sparisity plot}, which applies recursively to further subdivisions of each occupied square; the simple permutation action of \eqref{eq: bperm rule} manifests as the identical \((00,11)\) and \((01,10)\) subdivisions across each smaller occupied subsquare. Additional properties for $B_n$ are explored in terms of  longest increasing subsequences and number of cycles in \cite{pz_lis} and heights of binary search trees generated by these permutations in \cite{PZ2025heights}.

\subsubsection{Growth factors of random general butterfly matrices}
\label{sec:growth}

The \textit{growth factor}, $\rho(A) = \max_{k}\|A^{(k)}\|_{\max}/\|A\|_{\max}$, quantifies the maximal entry growth encountered during GE and controls numerical stability under finite precision (see \cite{urschel_notices} and references therein). In the simple case, \cite{PT23} fully characterizes the GEPP growth factor of $B \sim \B_s(N,\Sigma_S)$, showing the maximal entrywise growth is always attained in the final factor $U$ rather than in any intermediate matrix $B^{(k)}$, i.e., $|U_{NN}| = \|U\|_{\max} = \max_k \|B^{(k)}\|_{\max}$. This enables a closed form representation of the growth factor itself. The proof exploits the fact that the Kronecker structure of $\B_s(N)$ gives closed forms for \emph{every} intermediate GEPP factor $B^{(k)}$, not just the final $L,U$ pair for $PB = LU$ the GEPP factorization.

That argument does not extend to the general butterfly case. Fix $B = B(\theta,A_1,A_2)$ as in \Cref{p:ns factor} and consider eliminating the leading $m = N/2$ columns. Since these columns are $\begin{bmatrix}\cos\theta A_1\\-\sin\theta A_1\end{bmatrix}$, the first $m$ pivot decisions depend only on $A_1$ and the root angle $\theta$, and follow the same recursion as in the simple-matrix case. However, the corresponding row operations also act on the trailing $m$ columns, which are contributed by $A_2$ (specifically $\sin\theta A_2$ and $\cos\theta A_2$). Since $A_1 \ne A_2$ in general, there is not a clear reason for these elimination steps, built from the GEPP factors of $A_1$, to reduce the right block involving $A_2$ to any other object with a recursive closed form; the two halves of the recursion stop communicating in a way that induction alone can track. 

Yet, we still expect the growth to be controlled by the final $U$ factor alone. Each experiment we have run is consistent with the growth being attained somewhere by $U$; for instance, using double precision, every one of $10^3$ uniformly sampled butterfly matrices of order 512 attained maximal growth at this final $U$ entry. This further suggests:

\begin{conjecture}\label{conj:growth}
    For $B \sim \B(N,\Sigma_S)$, the entrywise GEPP growth factor is almost surely attained somewhere in the final factor $U$, i.e., $\rho(B) = \|U\|_{\max}/\|B\|_{\max}$.
\end{conjecture}

\subsection{Diagonal butterfly permutations}\label{sec: diagonal}

Unlike in the scalar butterfly matrix case, the GEPP factorization does not go through as directly for diagonal butterfly matrices. When no ties are encountered (i.e., when $|L_{ij}| < 1$ when $i > j$ for real-valued nonsingular matrices), then GEPP yields unique $L$ and $U$ factors for any row pivot permutation multiples of $A = LU$ (i.e., if $Q$ is a permutation matrix, then $B = QA$ has GEPP factorization $Q^\top B = LU$). However, each GEPP matrix factor is not necessarily invariant if the initial system uses \textit{column} permutations. (For example, GEPP pivoting is agnostic of any columns after the first $n-1$ of $A \in \mathbb C^{n \times m}$ for $m \ge n$, so permuting these columns has no impact on the GEPP permutation matrix factor.) The diagonal butterfly factors that involve conjugation by perfect shuffle permutation matrices  (permutation $q_k$ that corresponds to the permutation matrix $Q_k$ such that $Q_k(A \otimes B)Q_k^\top = B \otimes A$; cf. \eqref{def: diag factor}) thus obstruct a straightforward GEPP factorization result like \Cref{p:ns factor}.

All of this said, we can still define the diagonal butterfly permutations to again be the permutations corresponding to the GEPP permutation factors of random diagonal butterfly matrices, which we will denote by $B_{s,n}^{(D)}$ and $B_{n}^{(D)}$, respectively, for the simple and general diagonal butterfly permutations. Since $\B_s(N) \subset \B_s^{(D)}(N) \subset \B^{(D)}(N)$ and $\B_s(N) \subset \B(N) \subset \B^{(D)}(N)$, then $B_{s,n} \subset B_{s,n}^{(D)} \subset B_{n}^{(D)} \subset S_N$ and $B_{s,n} \subset B_n \subset B_n^{(D)} \subset S_N$. Note one can see $B_{s,n} \ne B_{s,n}^{(D)}$ and $B_n \ne B_n^{(D)}$ by considering first $N = 4$ (since $N = 2$ has equality everywhere since $B_{s,1} = S_2$). We note $B \in \B_s^{(D)}(4)$ of the form
\begin{align*}
    B = Q_2(R_{\pi/3}\oplus \V I_2)Q_2(\V I_2 \otimes R_{\pi/4})
    = \frac1{\sqrt 8} \begin{bmatrix}
        1 & 1 & \sqrt 3 & \sqrt 3\\
        -2 & 2 & 0 & 0\\
        -\sqrt 3 & -\sqrt 3 & 1 & 1\\
        0&0&-2 & 2
    \end{bmatrix}
\end{align*}
 has GEPP factorization $PB = LU$ for $P = P_{(1 \ 3 \ 2)} = P_{(2 \ 3)(1 \ 2)}$ and
\begin{equation*}
    L = \begin{bmatrix}
        1 & 0& 0& 0 \\ \frac{\sqrt{3}}2 & 1 & 0& 0\\
        -\frac12 & -\frac1{\sqrt 3} & 1 & 0\\ 0 & 0 & -\frac{\sqrt 3}2 & 1
    \end{bmatrix} \quad \mbox{and} \quad U = \frac1{\sqrt 8}\begin{bmatrix}
        -2 & 2 & 0 & 0\\0 & -2\sqrt 3 & 1 & 1\\ 0 & 0 & \frac{4}{\sqrt 3} & \frac{4}{\sqrt 3}\\0&0&0&4
    \end{bmatrix}.
\end{equation*}
Hence, $(1 \ 3 \ 2) \in B_{s,2}^{(D)}$. By considering $\pi_k = \pi(\V I_{2^k} \otimes B) \in B^{(D)}_{s,{k+2}}$ such that $P_{\pi_k} = \V I_{2^k} \otimes P_{(1 \ 3 \ 2)}$ so $\pi_k^3 = 1$,
then it follows $B^{(D)}_{s,n} \subset B^{(D)}_n$ always contains an element of order $3$ for all $n \ge 2$. Since all elements of $B_{s,n} \subset B_n$ must have order a power of 2 (since they belong to a 2-Sylow subgroup of $S_{2^n}$), then the inclusions $B_{s,n} \subset B^{(D)}_{s,n}$ and $B_n \subset B^{(D)}_n$ are necessarily strict.

Recall if one applies GEPP to a diagonal multiple of a permutation matrix $DP_\pi$ for $D$ a nonsingular diagonal matrix and $\pi \in S_n$, then one again necessarily recovers the inverse of the associated permutation matrix as the permutation factor, i.e., $\pi(DP_\pi) = \pi^{-1}$. If $\boldsymbol \theta$ is a vector with $\theta_j \in \{0,\frac\pi2\}$ for each $j$, then $B(\boldsymbol \theta)$ is a diagonal multiple of a permutation matrix (e.g., $R_{\pi/2} = P_{(1 \ 2)}((-1) \oplus 1)$), and so $\tilde B = \{\pi(B(\boldsymbol{\theta})): \theta_j \in \{0,\frac\pi2\}\}$ is a subset of the butterfly permutations. Since $|\tilde B| = 2^n$ when $B(\boldsymbol{\theta}) \in \B_s(N)$ and $|\tilde B| = 2^{2^n - 1}$ when $B(\boldsymbol{\theta}) \in \B(N)$, then necessarily $\tilde B$ aligns with $B_{s,n}$ and $B_n$; so no more butterfly permutations are possible than those generated using input angles $\{0,\frac\pi2\}$ for the scalar butterfly matrices. We reserve the analogous set of questions for diagonal butterfly permutations to future work.

\section{Pivot comparisons: norm selection and banded models}
\label{sec:bias}

In this section, we revisit the GEPP first pivot computations, starting with the tridiagonal $\beta$-Hermite ensembles and the banded matrices they motivate, and ending with the sensitivity to the norm used to compare complex entries in pivot searches.

The GOE, GUE, and GSE matrices are also called $\beta$-Hermite ensembles, where $\beta = 1,2,4$, respectively. These particular $\beta$-ensembles arise naturally in physics and random matrix theory applications (e.g., see \cite{Mehta}). This $\beta$ parameter explicitly manifests in terms of the eigenvalue problem for each ensemble, where the $\beta$-Hermite ensembles each have joint density 
\begin{equation}
    f_\beta(\boldsymbol \lambda) = c_\beta \prod_{i < j} | \lambda_i - \lambda_j|^\beta \exp\left(- \sum_{j = 1}^n \lambda_j^2/2\right), \qquad  c_\beta = \frac1{(2\pi)^{n/2}} \prod_{j=1}^n \frac{\Gamma\left(1 + \frac\beta 2\right)}{\Gamma\left(1 + \frac\beta 2 j\right)}.
\end{equation}
While the standard $\beta = 1,2,4$ models then have corresponding explicit matrix constructions (see also \cite{Mehta}) that match these eigenvalue distributions, general $\beta$-ensembles did not have a corresponding matrix model until Dumitriu and Edelman provided such a construction of tridiagonal real-valued random ensembles in \cite{DE02} for each $\beta > 0$. Their construction used Householder tridiagonalization of the standard Gaussian ensembles to motivate the tridiagonal form
\begin{equation} H_\beta \sim \frac1{\sqrt 2} \begin{bmatrix} N(0,2) & \chi_{\beta(n-1)}\\ \chi_{\beta(n-1)} & N(0,2) & \chi_{\beta(n-2)}\\ & \ddots & \ddots & \ddots\\ && \chi_{2\beta} & N(0,2) & \chi_{\beta}\\ &&& \chi_\beta & N(0,2) \end{bmatrix}. \end{equation}
The diagonal and distinct subdiagonal entries are mutually independent. With our conventions, $H_1$ has the same eigenvalue law as GOE$/\sqrt 2$, while $H_2$ has the same eigenvalue law as the GUE normalization used here. These models are obtained through tridiagonalization of the corresponding dense ensembles. Here, we further emphasize that these random matrix models are \textit{real}-valued, unlike the GUE and GSE constructions.

\begin{figure}[t]
    \centering
    \includegraphics[width=0.7\linewidth]{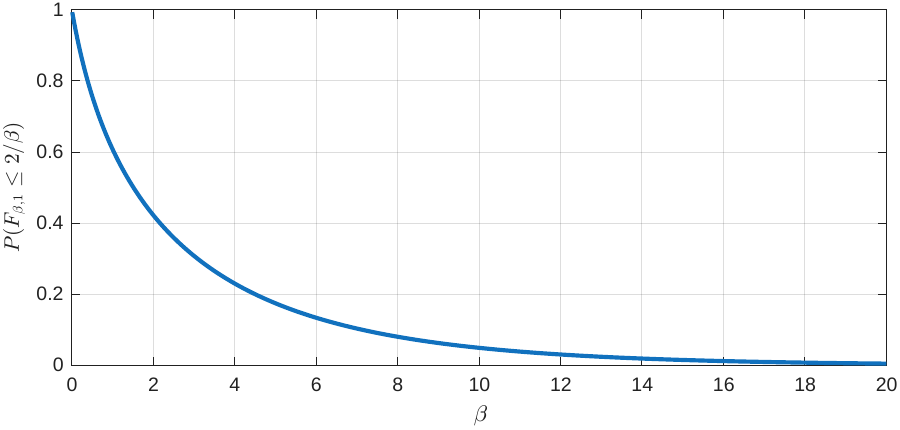}
    \caption{Plot of no pivot probabilities $p_2({\beta})$ for $H_\beta$}
    \label{fig:beta pivots}
\end{figure}

\begin{proposition}\label{p: beta pivot}
    Let $A \sim H_\beta$ be the $n \times n$ tridiagonal $\beta$-Hermite ensemble and $p_n(\beta) = \P(i_1 = 1)$. Then for every $\beta > 0$ and $n \ge 2$,
    \[
    p_n(\beta) = I_{2/3}\left(\frac{\beta(n-1)}2,\frac12\right) \sim \sqrt{\frac6{\pi\beta(n-1)}}\left(\frac23\right)^{\beta(n-1)/2},
    \]
    where $I_z(a,b)$ is the regularized incomplete beta function. In particular, $p_n(\beta)$ decays exponentially in $n$, faster than the order $1/n$ decay for a uniform permutation.
\end{proposition}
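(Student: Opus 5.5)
The first column of $H_\beta$ has only two nonzero entries: $A_{11} = Z/\sqrt2$ with $Z\sim N(0,2)$, and $A_{21} = Y/\sqrt2$ with $Y\sim\chi_{\beta(n-1)}$, and these are independent. So $i_1 = 1$ exactly when $|Z| \ge Y$, and
\[
p_n(\beta) = \P(Z^2 \ge Y^2).
\]
The plan is to turn this into a Beta probability. Write $Z^2 = 2X$ with $X\sim\chi_1^2 = \Gamma(\tfrac12,2)$, and $Y^2 = W\sim \chi^2_{\beta(n-1)} = \Gamma(\tfrac{\beta(n-1)}2,2)$, independent of $X$. Set $a = \beta(n-1)/2$ and $b = 1/2$. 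By the standard Gamma--Beta relation,
\[
T = \frac{W}{W+X} \sim \operatorname{Beta}(a,b).
\]
The event $2X \ge W$ is equivalent to $W \le 2X$, i.e. $W/(W+X) \le 2X/(X+2X)\cdot$, which simplifies to
\[
T \le \frac{2}{3}.
\]
Hence $p_n(\beta) = \P(T\le 2/3) = I_{2/3}(a,\tfrac12)$, which is the exact formula. The only care needed is the parametrization: the $N(0,2)$ diagonal contributes the factor $2$, and the common $1/\sqrt2$ scaling cancels.

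For the asymptotics, write
\[
I_{2/3}(a,\tfrac12) = \frac{1}{B(a,\tfrac12)}\int_0^{2/3} t^{a-1}(1-t)^{-1/2}\,dt.
\]
As $a\to\infty$ the integral is dominated by the endpoint $t = 2/3$. Put $t = \tfrac23 e^{-u}$; Laplace's method (or a direct Watson's lemma argument) then gives
\[
\int_0^{2/3} t^{a-1}(1-t)^{-1/2}\,dt \sim \frac{(2/3)^a}{a}\,(1/3)^{-1/2} = \frac{\sqrt3\,(2/3)^a}{a}.
\]
On the other side, $B(a,\tfrac12) = \Gamma(a)\Gamma(\tfrac12)/\Gamma(a+\tfrac12) \sim \sqrt{\pi}\,a^{-1/2}$. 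Combining,
\[
p_n(\beta) \sim \sqrt{\frac{3}{\pi a}}\left(\frac23\right)^a = \sqrt{\frac{6}{\pi\beta(n-1)}}\left(\frac23\right)^{\beta(n-1)/2},
\]
which matches the stated form.

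The main obstacle is making the endpoint Laplace step rigorous with its constant, namely controlling the $(1-t)^{-1/2}$ factor uniformly near $2/3$. This is routine once we bound it between its values on $[2/3-\delta, 2/3]$ and note that the contribution from $[0, 2/3-\delta]$ is exponentially smaller relative to $(2/3)^a$.

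Finally, since $(2/3)^{\beta(n-1)/2}$ decays geometrically for every fixed $\beta>0$, $p_n(\beta) = o(1/n)$. In particular $p_n(\beta) < 1/n$ for large $n$, so $\pi(A)$ is not uniform and the decay is faster than that of a uniform permutation.
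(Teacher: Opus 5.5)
Your proposal is correct and is essentially the paper's argument: the Gamma--Beta representation $W/(W+X)\sim\operatorname{Beta}(a,\tfrac12)$ with the event $\{W/(W+X)\le 2/3\}$ is exactly the $F$-distribution CDF identity the paper uses (with $\mu=\beta(n-1)$, $\nu=1$, $x=2/\mu$), and your endpoint Laplace computation derives the standard large-$a$ asymptotic $I_z(a,b)\sim z^a(1-z)^{b-1}/(a\,B(a,b))$ that the paper simply quotes. The stray ``$\cdot$'' after $2X/(X+2X)$ is only a typo, and the equivalence $W\le 2X \iff W/(W+X)\le 2/3$ is fine.
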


\begin{proof}
    The first pivot is $i_1 = 1$ exactly when $|A_{11}| \ge |A_{21}|$. Since $A_{11}^2 \sim \chi_1^2$ and $A_{21}^2 \sim \chi^2_{\beta(n-1)}/2$ are independent, this can be evaluated exactly using the $F$ distribution, where $F_{\mu,\nu} \sim (\chi^2_\mu/\mu)/(\chi^2_\nu/\nu)$ for independent numerator and denominator, with cumulative distribution function
    \[
    \P(F_{\mu,\nu} \le x) = I_{\mu x/(\mu x+\nu)}\left(\frac\mu2,\frac\nu2\right).
    \]
    Taking $\mu = \beta(n-1)$, $\nu = 1$, and $x = 2/\mu$ gives $\mu x = 2$ and $\mu x + \nu = 3$, so
    \[
    p_n(\beta) = \P(|A_{11}| \ge |A_{21}|) = \P\left(F_{\beta(n-1),1}\le \frac2{\beta(n-1)}\right) = I_{2/3}\left(\frac{\beta(n-1)}2,\frac12\right).
    \]
    For the asymptotics, we use the standard large-parameter behavior of the incomplete beta function: for fixed $z \in (0,1)$ and $b > 0$, as $a \to \infty$,
    \[
    I_z(a,b) \sim \frac{z^a(1-z)^{b-1}}{a\,B(a,b)}, \qquad B(a,b) \sim \Gamma(b)\,a^{-b}.
    \]
    With $z = 2/3$, $b = 1/2$, and $a = \beta(n-1)/2$, we have $(1-z)^{-1/2} = \sqrt3$ and $\Gamma(1/2) = \sqrt\pi$, so
    \[
    p_n(\beta) \sim \frac{\sqrt3}{\sqrt\pi}\sqrt{\frac{2}{\beta(n-1)}}\left(\frac23\right)^{\beta(n-1)/2} = \sqrt{\frac6{\pi\beta(n-1)}}\left(\frac23\right)^{\beta(n-1)/2}. \qedhere
    \]
\end{proof}

For $n=2$ the incomplete beta function is elementary, and the $2\times 2$ real-valued tridiagonal $\beta$-Hermite ensembles have no first pivot needed (i.e., $i_1=1$) with probability
\[
p_2(1)=\frac2\pi\arctan\sqrt2\approx0.6082,\qquad
p_2(2)=1-\frac1{\sqrt3}\approx0.4226,\qquad
p_2(4)=1-\frac4{3\sqrt3}\approx0.2302,
\]
for $\beta=1,2,4$, respectively. \Cref{fig:beta pivots} plots $p_2(\beta)$ for different $\beta$. Note that $p_2(1)$ agrees exactly with the GOE computation $p_2$ from \Cref{sec:background}.

The exact computation above is for the first pivot only, and it already shows how different the tridiagonal ensembles are from the dense ones. Here $i_1 \in \{1,2\}$, so $p_n(\beta) \to 0$ exponentially fast means the first step swaps rows with probability tending to $1$, yet each swap only moves a row by one position. This is the general mechanism for banded matrices, where the limiting permuton is determined by a sublinear bandwidth argument that does not depend on the distribution of the entries.

\begin{theorem}\label{t: banded permuton}
Let $d = d_n \ge 1$ be an integer sequence with $d=o(n)$, and let $A =A_n \in \mathbb R^{n \times n}$ be any random matrix with $A_{ij}=0$ whenever $|i-j|\ge d$. Let $\pi_n=\pi(A)$ be the GEPP permutation. Then
\(        \sum_{j=1}^n |\pi_n(j)-j|
        \le 2(d-1)(n-1)
   \)
surely, and consequently $\mu_{\pi_n}$ converges weakly to the diagonal permuton, the law of $(U,U)$ for $U\sim\Unif([0,1])$.
\end{theorem}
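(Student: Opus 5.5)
The plan is to show that every GEPP pivot index satisfies $k \le i_k \le k+d-1$. The displacement bound then follows from \eqref{eq:GEPP perm} by a triangle-inequality count, and the permuton limit from a Lipschitz test-function estimate. Everything is deterministic, so it holds surely for any random $A$. It also holds regardless of the tie-breaking and zero-column conventions, since those only ever choose $i_k=k$.

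\textbf{Step 1 (band invariant).} By induction on $k$, I would prove the following: after $k-1$ elimination steps, for every position $j \ge k+d-1$, row $j$ of $A^{(k)}$ equals original row $j$ of $A$. The base case $k=1$ is trivial. For the inductive step, assume the invariant at stage $k$. Then for $j \ge k+d$ the entry $A^{(k)}_{jk} = A_{jk} = 0$, because $j-k \ge d$. Hence the argmax in \eqref{eq:GEPP perm} lies in $\{k,\dots,k+d-1\}$, so $i_k \le k+d-1$. The swap $(k\ i_k)$ therefore does not touch positions $j \ge k+d$. The elimination update at those positions uses multiplier $A^{(k)}_{jk}/(\text{pivot}) = 0$ (or is skipped entirely for a zero column), so those rows are unchanged. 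This gives the invariant at stage $k+1$ for positions $j \ge (k+1)+d-1$. I expect this bookkeeping of which rows are untouched to be the only real obstacle, since the rows in the window $k,\dots,k+d-2$ do fill in. The point is that the invariant never needs to say anything about them.

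\textbf{Step 2 (displacement).} Write $D(\sigma)=\sum_j|\sigma(j)-j|$. Composing $\sigma$ on either side with a transposition $(a\ b)$ changes only two terms of the sum. By the triangle inequality each of those terms changes by at most $|a-b|$, so $D(\sigma(a\ b))\le D(\sigma)+2|a-b|$, and similarly for left composition. Applying this $n-1$ times along \eqref{eq:GEPP perm}, using $|i_k-k|\le d-1$ from Step 1 and $D(\mathrm{id})=0$, gives $D(\pi_n)\le 2(d-1)(n-1)$.

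\textbf{Step 3 (permuton limit).} Take a bounded Lipschitz function $f$ on $[0,1]^2$ with constant $L$. Then
\[
\Bigl|\int f\,d\mu_{\pi_n}-\frac1n\sum_{j=1}^n f(j/n,j/n)\Bigr|\le \frac{L}{n}\sum_{j=1}^n\frac{|\pi_n(j)-j|}{n}\le \frac{2L(d-1)}{n}\to0,
\]
using $d=o(n)$. The Riemann sum $\frac1n\sum_j f(j/n,j/n)$ converges to $\int_0^1 f(u,u)\,du=\mathbb E f(U,U)$. Bounded Lipschitz functions are convergence-determining for weak convergence of probability measures on $[0,1]^2$. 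Hence $\mu_{\pi_n}$ converges weakly, surely (and a fortiori in probability), to the diagonal permuton. If one prefers $\hat\mu_{\pi_n}$, it differs from $\mu_{\pi_n}$ in bounded-Lipschitz distance by $O(1/n)$, so it has the same limit.
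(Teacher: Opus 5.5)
Your proof is correct and follows essentially the same route as the paper: confine each pivot to the window $k\le i_k\le k+d-1$ (your full-row invariant is a cleaner version of the paper's induction), bound the footrule by $2|i_k-k|$ per transposition, and pass to the limit with a Lipschitz estimate that is the dual form of the paper's coupling bound $\frac1{n^2}\sum_j|\pi_n(j)-j|\le 2(d-1)/n$. One small overstatement: the window argument is insensitive to tie-breaking only when the active column is nonzero, so for a zero column you rely on the paper's specific convention $i_k=k$ (which the minimal-maximizer rule also gives), not on an arbitrary one.
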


\begin{proof}
    We first show that the pivot row $i_k$ chosen at step $k$ satisfies $i_k \in \{k, k+1, \dots, k+d-1\}$. Row swaps and elimination only mix rows $k,\dots,k+d-1$ at step $k$, so the entries of column $k+1$ below row $k+d$ are untouched and remain zero. Inductively, at step $k$ the entries of column $k$ in rows $i > k+d-1$ are zero. If the active column has a positive maximum, the pivot then lies in the window above; if the active column is zero, our convention gives $i_k=k$. Hence the claimed bound on $i_k$ holds in either case.

    Now $\pi_n$ is a product of the transpositions $(k\ i_k)$, $1 \le k \le n-1$. The footrule $D(\sigma) = \sum_j |\sigma(j)-j|$ satisfies $D(\sigma\tau) \le D(\sigma) + D(\tau)$, and $D((k\ i_k)) = 2|i_k - k| \le 2(d-1)$. Hence $D(\pi_n) \le 2(d-1)(n-1)$. Last, we couple $(J/n, \pi_n(J)/n)$ with $(J/n, J/n)$ for $J \sim \Unif\{1,\dots,n\}$. The expected $\ell^1$ distance is
    \[
    \frac1{n^2}\sum_{j=1}^n |\pi_n(j)-j| \le \frac{2(d-1)}{n} \to 0,
    \]
    and $(J/n,J/n)$ converges weakly to $(U,U)$.
\end{proof}

For $d=2$ this covers the tridiagonal $\beta$-Hermite ensembles for every fixed $\beta>0$, where $i_k \in \{k,k+1\}$ and $\sum_j |\pi_n(j)-j| \le 2(n-1)$. So their GEPP permutations converge to the diagonal permuton, in contrast to the conjectured Lebesgue limit on $[0,1]^2$ for the dense Gaussian ensembles. Future work can also consider limits for GEPP induced permutations for banded random matrices with bandedness parameter $d$ growing proportional to $n$.

\subsection{Discrepancy for GUE pivot comparisons}
\label{sec:discrepancy}

In \cite{P24}, the second author derived the pivot probabilities (i.e., $1 - p_2$) for the GOE and GUE in the $2\times2$ case using the sampling distributions following $A \sim (G + G^*)/\sqrt 2$ but now for $G$ with standard \textit{complex} Gaussian entries for the GUE case, $X \sim N_{\mathbb C}(0,1)$, where $X \stackrel{d}{=}(Y_1 + i Y_2)/\sqrt 2$ for iid $Y_i \sim N(0,1)$. For the GOE, where $A_{11}\sim N(0,2)$ and $A_{21}\sim N(0,1)$ are independent, one obtains again the value $p_2(1)$ (since $\P(|A_{11}| \ge |A_{21}|)
=\P(F_{1,1}\le 2)
=p_2(1)$). For the GUE, the corresponding computation in \cite{P24} gave the same pivot probability as the real-valued $H_2$ construction $1-p_2(2)$. For GUE, since $A_{11}\sim N(0,1)$ and $A_{21}\sim N_{\mathbb C}(0,1)$, this calculation used the standard complex modulus $
|x+iy|=\sqrt{x^2+y^2},
$ corresponding to the $\ell^2$-norm when a complex number is viewed as a vector in $\mathbb R^2$ for the pivot candidate comparison to determine $i_1$. Under this convention, the $H_2$ pivot probability $1-p_2(2)$ is recovered. However, this value does not agree with empirical trials using standard implementations of GEPP in MATLAB, that recover a pivot probability of $2/3$ instead of $1-p_2(2)=1/\sqrt 3 \approx 0.57735$.

The discrepancy arises from the norm used for complex pivot comparisons in standard LAPACK implementations \cite{lapack99}. In particular, the \texttt{ZGETRF} routine used for complex-valued GEPP compares pivot candidates according to $
\|x+iy\|_1=|x|+|y|,
$ rather than the standard complex modulus $|x+i y| = \sqrt{x^2+y^2}$; the corresponding real-valued routine \texttt{DGETRF} uses the usual absolute value. This convention is inherited by implementations such as MATLAB and NumPy/SciPy. (This distinction does not affect the $H_\beta$ ensembles considered above, which are strictly real-valued.) Under the LAPACK convention, the GUE pivot probability is instead exactly $2/3$, in agreement with empirical observations; we derive this value below as a special case of a general formula for the $\ell^1$ and $\ell^2$ pivot probabilities of $\GUE(n)$.

We can now revisit the computations of $p_n = \P(i_1 = 1)$ as seen in \Cref{sec:background} for $A \sim \GUE(n)$, for each of the pivot comparison norm choices.

\begin{proposition}
\label{p: gue pn}
    Let $A \sim \GUE(n)$ and define $p_n^{(k)} = \P(\pi(A)(1) = 1)$ for $\pi(A) = \pi(A,k)$ the GEPP induced permutation that determines pivot candidates using $\ell^k$ evaluations of complex entries. Then $\pi(A) \not\sim \Unif(S_n)$  for all $n \ge 2$ and $k \in \{1,2\}$, while $p_n^{(1)}$ is rational and $p_n^{(2)}$ is irrational, where
    \begin{equation}\label{eq: pn GUE}
        p_n^{(1)} = \frac1{2n-1}, \qquad p_n^{(2)} \sim \frac1{\sqrt{2n \log n}}.
    \end{equation}
\end{proposition}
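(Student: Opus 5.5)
The plan is to reduce both pivot rules to explicit order-statistic computations for the leading column, in the same way as \Cref{p: goe pn}. Writing $A=(G+G^*)/\sqrt2$ for complex Ginibre $G$, the leading column has independent entries. The diagonal entry is $A_{11}=Y\sim N(0,1)$, which is real. The entries below it are $A_{j1}=(Z_{j1}+iZ_{j2})/\sqrt2\sim N_{\mathbb C}(0,1)$ for $j=2,\ldots,n$, with all $Y,Z_{j\ell}$ iid $N(0,1)$. By \Cref{l: fixed}, $p_n^{(k)}=\P\bigl(|Y|\ge \max_{j\ge2}\|A_{j1}\|_k\bigr)$; ties have probability zero. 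Once the two formulas are in hand, non-uniformity is immediate. For $k=1$ we get $1/(2n-1)\ne1/n$ for $n\ge2$. For $k=2$, an irrational number cannot equal $1/n$.

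\emph{The $\ell^1$ case.} The key identity is $|a|+|b|=\max(|a+b|,|a-b|)$, which gives
\[
\|A_{j1}\|_1=\frac{|Z_{j1}|+|Z_{j2}|}{\sqrt2}=\max\bigl(|W_{j1}|,|W_{j2}|\bigr),\qquad W_{j1}=\tfrac{Z_{j1}+Z_{j2}}{\sqrt2},\ W_{j2}=\tfrac{Z_{j1}-Z_{j2}}{\sqrt2}.
\]
Here $W_{j1},W_{j2}$ are again iid $N(0,1)$, since this is an orthogonal change of variables. Hence $p_n^{(1)}$ is the probability that $|Y|$ is the largest of $2n-1$ iid $|N(0,1)|$ variables. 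By exchangeability this equals $1/(2n-1)$, which is rational. The case $n=2$ recovers the LAPACK value $2/3$.

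\emph{The $\ell^2$ case: integral representation and asymptotics.} Here $|A_{j1}|^2=(Z_{j1}^2+Z_{j2}^2)/2\sim\operatorname{Exp}(1)$, iid and independent of $Y$. Conditioning on $Y$ gives
\[
p_n^{(2)}=\mathbb E\bigl[(1-e^{-Y^2})^{n-1}\bigr]=\sqrt{\tfrac{2}{\pi}}\int_0^\infty e^{-y^2/2}(1-e^{-y^2})^{n-1}\,dy .
\]
Substituting $s=e^{-y^2}$ turns this into
\[
p_n^{(2)}=\frac{1}{\sqrt{2\pi}}\int_0^1 (1-s)^{n-1}s^{-1/2}\,[\log(1/s)]^{-1/2}\,ds .
\]
I then argue exactly as in \Cref{p: goe pn}. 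The factor $(1-s)^{n-1}$ localizes the mass near $s\asymp1/n$, where the slowly varying factor $[\log(1/s)]^{-1/2}$ may be replaced by $(\log n)^{-1/2}$. What remains is $B(\tfrac12,n)\sim\sqrt\pi\,n^{-1/2}$, and combining constants gives $p_n^{(2)}\sim 1/\sqrt{2n\log n}$. To justify pulling out the log factor, I split at $s=n^{-1}(\log n)^{\pm2}$ and bound the two tails separately.

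\emph{The $\ell^2$ case: irrationality.} This is the step I expect to be the main obstacle. Expanding the binomial and using $\mathbb E e^{-kY^2}=(1+2k)^{-1/2}$ gives
\[
p_n^{(2)}=\sum_{k=0}^{n-1}(-1)^k\binom{n-1}{k}\frac{1}{\sqrt{2k+1}}.
\]
This is a rational combination of $1/\sqrt{m}$ over odd $m\le 2n-1$. I group the terms by the squarefree part of $2k+1$ and use the linear independence over $\mathbb Q$ of square roots of distinct squarefree integers (Besicovitch). It then suffices to find one squarefree class $\ne1$ with nonzero total coefficient.

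By Bertrand's postulate there is a prime $p$ with $(2n-1)/2<p\le 2n-1$; this works for all $n\ge2$, and $p=3$ when $n=2$. Such a $p$ is odd. The only odd $m\le 2n-1$ with squarefree part $p$ is $m=p$ itself, because the next candidate $9p$ exceeds $2n-1$. So the $\sqrt p$-class consists of the single term with $k=(p-1)/2$, whose coefficient $\pm\binom{n-1}{(p-1)/2}/p$ is nonzero. Hence $p_n^{(2)}$ is irrational.
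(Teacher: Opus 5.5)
Your proposal is correct and follows the paper's proof essentially step for step. For the $\ell^1$ case you use the rotation $|a|+|b|=\max(|a+b|,|a-b|)$ plus exchangeability; for the $\ell^2$ asymptotics, exponential conditioning, the substitution $s=e^{-y^2}$ and Beta-integral localization; and for irrationality, the binomial expansion with Bertrand's postulate and linear independence of square roots of squarefree integers. Your explicit split at $s=n^{-1}(\log n)^{\pm2}$ only makes rigorous a step the paper states informally, and choosing $p=3$ at $n=2$ correctly avoids the even prime $2$ that your stated interval would otherwise allow.
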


\begin{proof}
    Since $A \sim \GUE(n)$, we note $A_{11} \sim N(0,1)$ and $A_{j1} \sim N_{\mathbb C}(0,1)$, so $A_{j1}\stackrel{d}{=}(X_{j1} + i X_{j2})/\sqrt 2$ for $X_{ij} \stackrel{iid}{\sim}N(0,1)$. 
    
    For $p_n^{(1)}$, we note first
    \[
    |x_1|+|x_2| = \max(|x_1 + x_2|,|x_1-x_2|) = \sqrt 2 \max(|u|,|v|), \qquad u = \frac{x_1+x_2}{\sqrt 2}, v = \frac{x_1-x_2}{\sqrt 2},
    \]
    while $(u,v)$ is an orthogonal transformation of $(x_1,x_2)$ via a scaled Hadamard transformation. It follows
    \[
    p_n^{(1)} = \P\left(\sqrt 2 |X_{11}| \ge \max_{2 \le j \le n}[|X_{j1}| + |X_{j2}|]\right) = \P\left(|X_{11}| \ge \max_{2 \le j \le n} \max(|U_j|,|V_j|)\right),
    \]
    where we note the $\sqrt 2$ factor can now be canceled from each term. Now $X_{11},U_j,V_j$ for $2 \le j \le n$ comprise $1+2(n-1) = 2n-1$ iid $N(0,1)$ random variables (using the orthogonal invariance of Gaussian vectors). So $p_n^{(1)}$ is just the probability a particular one of these is maximal in magnitude, which is precisely $1/(2n-1)$ by exchangeability. So using the $\ell^1$ norm, then $p_n^{(1)} < 1/n$ for $n \ge 2$ yields $\pi(A) \not\sim \Unif(S_n)$.
    
    Let $Y_j = X_{j1}^2 + X_{j2}^2$ for $2 \le j \le n$; these are iid exponential with mean $2$, so $\P(Y_j \le 2x^2) = 1 - e^{-x^2}$. Conditioning on $X_{11}$ gives
    \[
    p_n^{(2)} = \P\Big(2X_{11}^2 \ge \max_{2 \le j \le n} Y_j\Big) = \mathbb E\big[(1 - e^{-X_{11}^2})^{n-1}\big].
    \]
    Substituting $s = e^{-x^2}$ gives
    \begin{equation}\nonumber
    p_n^{(2)} = \frac1{\sqrt{2\pi}}\int_0^1 s^{-1/2}(-\log s)^{-1/2}(1-s)^{n-1}\,ds \sim \frac1{\sqrt{2n\log n}},
    \end{equation}
    where the asymptotics follow as in \Cref{p: goe pn}: the factor $(1-s)^{n-1}$ localizes the integral to $s \asymp 1/n$, where $(-\log s)^{-1/2} \sim (\log n)^{-1/2}$, leaving 
    $$\frac{1}{\sqrt{2\pi \log n}}\int_0^\infty s^{-1/2}e^{-ns}\,ds = \frac{1}{\sqrt{2n\log n}}.
    $$
    
    To see $p_n^{(2)} \ne 1/n$ for all $n \ge 2$, expand the binomial and use $\mathbb E[e^{-kX^2}] = (1+2k)^{-1/2}$:
    \begin{equation}
    \label{eq: gue exact l2}
    p_n^{(2)} = \sum_{k=0}^{n-1} (-1)^k \binom{n-1}{k} (1+2k)^{-1/2}.
    \end{equation}
    By Bertrand's postulate there is a prime $q = 1+2k_0$ with $n < q < 2n$. No other odd integer $m \le 2n-1$ has squarefree part $q$, since such $m$ would be $j^2 q \ge 9q > 2n-1$. Since square roots of distinct squarefree integers are linearly independent over $\mathbb Q$ (see, e.g., \cite{fractional_powers}), and $q^{-1/2}$ has nonzero coefficient $(-1)^{k_0}\binom{n-1}{k_0}$, the sum is irrational. Hence $p_n^{(2)} \ne 1/n$ and so $\pi(A) \not\sim \Unif(S_n)$ for the $\ell^2$ norm.  
\end{proof}

Moreover, \Cref{p: gue pn} shows both $p_n^{(1)}$ and $p_n^{(2)}$ are each $o_n(1)$, while $p_n^{(1)} = \Theta(1/n)$ and $p_n^{(2)} = \omega(1/n)$. However, unlike as was seen in the GOE case (cf. \Cref{p: goe pn}), we do not have $p_n^{(2)} > 1/n$ until $n = 6$: using \Cref{eq: gue exact l2}, then
\begin{align*}
    p_2^{(2)} &= 1-\frac1{\sqrt 3} \approx 0.4226 < \frac12\\
    p_3^{(2)} &= 1 - \frac2{\sqrt 3}+\frac1{\sqrt 5} \approx 0.2925 < \frac13\\
    p_4^{(2)} &= 1 - \sqrt 3 + \frac3{\sqrt 5} - \frac1{\sqrt 7} \approx 0.2316 < \frac14\\
    p_5^{(2)} &= 1  - \tfrac4{\sqrt3} + \tfrac6{\sqrt5} - \tfrac4{\sqrt7} + \tfrac13 \approx 0.1954 < \frac15\\
    p_6^{(2)} &= 1 - \tfrac5{\sqrt3} + \tfrac{10}{\sqrt5} - \tfrac{10}{\sqrt7} + \tfrac53 - \tfrac1{\sqrt{11}} \approx 0.1709 > \frac16
\end{align*}
We can further show $p_n^{(2)} > 1/n$ for all $n \ge 6$. This is accomplished using direct analysis of the auxiliary function $f(x) = p_x^{(2)} - 1/x$ for real $x > 1/2$, where $p_x^{(2)}$ denotes the integral form above. Write $1/x = \int_0^1 (1-s)^{x-1}\,ds$, so we have
\[
f(x) = \int_0^1 \big(g(s)-1\big)(1-s)^{x-1}\,ds, \qquad g(s) = \frac{1}{\sqrt{2\pi}}\,s^{-1/2}(-\log s)^{-1/2},
\]
and the substitution $s = 1-e^{-u}$ turns this into the Laplace transform $f(x) = \int_0^\infty \big(g(1-e^{-u})-1\big)e^{-xu}\,du$. Now $g(s) > 1$ if and only if $-s\log s < 1/(2\pi)$. The function $-s\log s$ is concave on $[0,1]$, vanishes at both endpoints, and has maximum $1/e > 1/(2\pi)$, so $g-1$ changes sign exactly twice (from $+$ to $-$ to $+$). The generalized Descartes' rule of signs for Laplace transforms then implies that \(f\) has at most two zeros on \((1/2,\infty)\), counted with multiplicity (see, e.g., \cite[Theorem 4]{BhaskarSong2014}). One zero is $x=1$, since $p_1^{(2)} = 1$. Moreover, we have $p_5^{(2)} < 1/5$ and $p_6^{(2)} > 1/6$ by above, so $f$ has a second zero $x_0 \in (5,6)$. These are all the zeros of $f$, so $f(n) > 0$ and hence $p_n^{(2)} > 1/n$ for every integer $n \ge 6$.

Taking $n=2$ in \Cref{eq: pn GUE} recovers exactly the empirically observed discrepancy noted above:

\begin{corollary}\label{prop: 2/3}
    Let $A \sim \GUE(2)$ and GEPP pivot comparisons use the $\ell^1$ norm (as in LAPACK). Then the pivot probability is
    \begin{equation}\nonumber
    \P(\|A_{21}\|_1 > |A_{11}|) = \P(|Z_1| + |Z_2| > \sqrt 2 |Z_3|) = \frac23
\end{equation}
for $Z_i \sim N(0,1)$ iid.
\end{corollary}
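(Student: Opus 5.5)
The plan is to specialize the $\ell^1$ argument from the proof of \Cref{p: gue pn} to $n=2$, and to check directly that the two displayed expressions agree. For $A \sim \GUE(2)$ we have $A_{11} \sim N(0,1)$ and $A_{21} \sim N_{\mathbb C}(0,1)$, independent. Writing $A_{21} = (Z_1 + i Z_2)/\sqrt2$ with $Z_1,Z_2$ iid $N(0,1)$ and $A_{11} = Z_3$, the $\ell^1$ evaluation gives $\|A_{21}\|_1 = (|Z_1|+|Z_2|)/\sqrt2$. Multiplying through by $\sqrt 2$ then turns the event $\|A_{21}\|_1 > |A_{11}|$ into $|Z_1|+|Z_2| > \sqrt2|Z_3|$, which is the middle expression.

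Next I would check that this event is exactly the complement of $\{i_1 = 1\}$. Under the $\min\argmax$ tie-breaking of \eqref{eq:GEPP perm}, we have $i_1 = 2$ precisely when the strict inequality $\|A_{21}\|_1 > |A_{11}|$ holds. Ties have probability zero since the entries are absolutely continuous, so the pivot probability equals $1 - p_2^{(1)}$.

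Plugging $n=2$ into \eqref{eq: pn GUE} gives $p_2^{(1)} = 1/3$, hence a pivot probability of $2/3$. For a self-contained version, I would repeat the Hadamard step. Using $|Z_1|+|Z_2| = \sqrt2\max(|U|,|V|)$ with $U=(Z_1+Z_2)/\sqrt2$ and $V=(Z_1-Z_2)/\sqrt2$, the event becomes $\max(|U|,|V|) > |Z_3|$. By orthogonal invariance, $U$, $V$, $Z_3$ are iid $N(0,1)$, so by exchangeability $|Z_3|$ is the largest of the three magnitudes with probability $1/3$, and the event has probability $2/3$.

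There is no real obstacle here. The only care needed is the identity $|x_1|+|x_2| = \max(|x_1+x_2|,|x_1-x_2|)$ and confirming that $(U,V)$ is an orthogonal image of $(Z_1,Z_2)$ independent of $Z_3$; both are already used in the proof of \Cref{p: gue pn}.
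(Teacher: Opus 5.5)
Your proposal is correct and follows essentially the paper's route: the paper likewise sets $n=2$ in \Cref{p: gue pn} to get $p_2^{(1)} = 1/3$ and takes the complement, and your self-contained Hadamard/exchangeability step simply repeats the $\ell^1$ argument from that proposition's proof. Your explicit check that the middle expression matches (via $A_{21} = (Z_1+iZ_2)/\sqrt2$ and $A_{11}=Z_3$), and that ties have probability zero under the $\min\argmax$ rule, is a small addition the paper leaves implicit.
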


\begin{proof}
    By \Cref{p: gue pn}, $p_2^{(1)} = 1/(2\cdot2-1) = 1/3$; since $p_2^{(1)} = \P(\|A_{21}\|_1 \le |A_{11}|)$, an $\ell^1$-pivot movement is needed with probability $1 - 1/3 = 2/3$.
\end{proof}

\begin{remark}[Geometric view of pivoting]
    \Cref{prop: 2/3} also admits a direct geometric derivation (\Cref{fig:pivot-regions}). Writing a standard iid Gaussian vector $\V x \in \mathbb R^3$ in polar form as $\V x = R \V u$ for $\V u \sim \operatorname{Unif}(S^2)$ independent of $R=\|\V x\|_2\sim \chi_3$, the pivot probability reduces to the surface area, relative to the full sphere, of the region $|x|+|y| > \sqrt2|z|$ on $S^2$. \Cref{fig:pivot-regions} compares this region to the $\ell^2$ analogue $\sqrt{x^2+y^2} > \sqrt2|z|$ that the standard complex modulus would give  (with area $1/\sqrt 3 \approx 0.57735$). By symmetry this reduces to a single first-octant integral, that becomes a  trigonometric double integral after switching to spherical coordinates that recovers the $2/3$ value.

    \begin{figure}[t]
        \centering
        \begin{subfigure}[t]{0.35\linewidth}
            \centering
            \includegraphics[height=5cm]{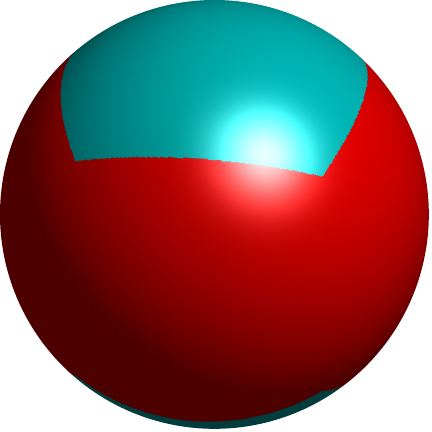}
            \caption{$|x| + |y| > \sqrt{2} |z|$}
        \end{subfigure}
        \qquad
        \begin{subfigure}[t]{0.35\linewidth}
            \centering
            \includegraphics[height=5cm]{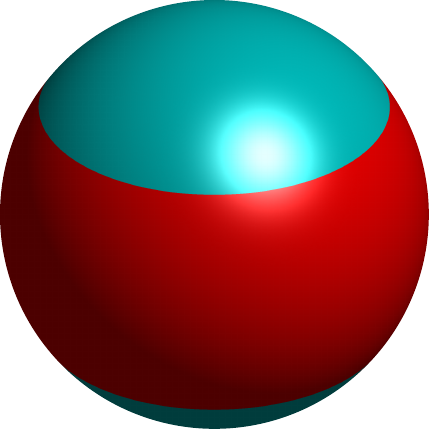}
            \caption{$\sqrt{x^2 + y^2} > \sqrt{2} |z|$}
        \end{subfigure}
        \caption{Regions on the unit sphere defined by different norms used in pivoting comparisons for $\GUE(2)$.}
        \label{fig:pivot-regions}
    \end{figure}
\end{remark}

\section{Discussion and future directions}
\label{sec:discussion}


Conjecture~\ref{conj:goe-permuton} raises a parallel question at the level of the full permutation rather than a single pivot: our supporting evidence for the conjectured uniform permuton limit for Wigner and iid matrices is currently limited to GOE, GUE, and Bernoulli matrices. Extending it to GSE and other $\beta$-ensembles not covered by \Cref{thm:unif_perm} remains open. Such a proof would need to control the global permutation structure directly, rather than the single-pivot statistic $i_1$ used above. A similar question can be extended to random banded matrix ensembles.

The exact butterfly matrix factorization of \Cref{sec:factorization} also leaves open questions once one moves past the cases where closed forms are available. Conjecture~\ref{conj:growth} asks whether the entrywise GEPP growth factor for general butterfly matrices is always attained in the final factor $U$, as it is for simple butterfly matrices \cite{PT23}; the trailing-block cross terms identified in \Cref{sec:growth} appear to be the main obstacle to extending that argument. A related question is how the exact factorization behaves under floating-point arithmetic. Huang and Tikhomirov \cite[Theorem A]{HT23} showed for iid Gaussian matrices that GEPP has polynomial rather than worst-case exponential growth with high probability (excluding underflow and overflow), and that the computed and exact permutation factors agree with very high probability; the corresponding analysis for random butterfly matrices remains open. Similarly, future work will outline limits of butterfly permutations (including general $p$-ary constructions); the self-similar fractal permuton structure aligns with multiplicative cascade constructions rather than the limiting skew Brownian permutons for uniformly random separable permutations (compare \Cref{fig:unif permuton,fig:sparisity plot}). 

The norm dependence uncovered in \Cref{sec:discrepancy} points to a further question about pivoting strategy itself. Pivot movements are a known communication bottleneck for GEPP on parallel architectures \cite{Pa95}, motivating communication-avoiding pivoting strategies such as CALU \cite{calu} and preprocessing schemes that avoid pivoting altogether \cite{baboulin,Pa95,PT23}; the choice of norm for complex-valued pivot comparisons is a further parameter in that design space. While norm-dependent stability effects have been studied in related pivoting settings \cite{Higham98}, their broader implications for stability, and their average-case behavior for random matrix ensembles, remain interesting questions.

The above analysis is limited to the permutations induced using GEPP. Similar questions can be asked of other pivoting strategies like rook or complete pivoting. One could also explore algorithms that permute matrices for other reasons, such as column-pivoted QR or pivoted Cholesky decompositions.

\bibliographystyle{plain}
\bibliography{references}

\end{document}